\newcommand{\R}{{\mathbb R}}
\newcommand{\C}{{\mathbb C}}
\newcommand{\HQ}{{\mathbb H}}
\newcommand{\BQ}{{\mathbb B}}
\newcommand{\vp}{\vspace{0.3cm}}
\newcommand{\bc}{\begin{center}}
\newcommand{\ec}{\end{center}}
\newcommand{\Fe}{{\cal F}}
\newcommand{\SL}{{\rm SL}}
\newcommand{\PSL}{{\rm PSL}}
\newcommand{\SU}{{\rm SU}}
\newcommand{\Iso}{{\rm ISO}}
\newcommand{\tr}{{\rm tr}}
\newcommand{\qed}{\enspace\vrule  height6pt  width4pt  depth2pt}
\newenvironment{proof}{\par\noindent{\bf Proof.}}{$\qed$\par\bigskip}
\newtheorem{theorem}{Theorem}[section]
\newtheorem{lemma}[theorem]{Lemma}
\newtheorem{proposition}[theorem]{Proposition}
\begin{document}

\title{A Note on Discrete Groups\thanks{Mathematics subject Classification Primary
[$30F40$]; Secondary [$20H10$].
Keywords and phrases: Hyperbolic space, Isometric  sphere, Bisector, Canonical region.
\newline  Research partially supported by CNPq, UFPB and UNIVASF.
 }}

\author{ S. O. Juriaans, S.C. Lima Neto,  A. De A. E Silva}
\date{}

\maketitle

\begin{abstract}
We  prove that a Kleinian groups has a DF domain if and only if it has a DC domain . The Fuchsian case has recently been considered,  it was shown that, in this case, there are no cocompact examples and  cocompact Kleinian  examples were given. Here we prove that,  in the Kleinian case, there are no cocompact torsion free examples  and we describe the symmetries of a fundamental domain of such a group.
 \end{abstract}

% etc
% \thanks is optional - remove next line if not needed
%\thanks{\emph{Present address:} Insert the address
%\date{Received: date / Revised version: date}
% The correct dates will be entered by the editor

\section{Introduction}

In \cite{kijusiso} it is proved that in hyperbolic 2 and 3-space the isometric spheres, in the ball models, are also the Poincaré bisectors. This was used to get explicit formulas for the Poincaré bisectors in hyperbolic 2 and 3-space. Using these formulas, generators were found for discrete groups of quaternions division algebras and  Poincaré fundamental polygons were constructed for the Bianchi groups and the Figure Eight Knot group.   Note that in \cite{johansson, katok, swan} similar questions are addressed.

  Two interesting problems are that of deciding when a Ford fundamental domain coincides with a Poincaré fundamental domain  (called a Dirichlet-Ford domain or DF domains) and when a Poincaré fundamental domain has more than one center (called a Double Dirichlet domain or DC domain). These problems were raised  in \cite{lakeland} and solved, in the same paper,  for Fuchsian groups. In particular it is proved that there are no cocompact examples in this case.  In \cite{kijusiso} an independent proof was given and an algebraic criterium was  established  which  the set of side-pairing transformations must satisfy. Actually it turns out that, in the Fuchsian case, these two problems have  identical solutions (\cite{lakeland}) and the question remained to see what happens in the Kleinian case.

Our main result in this paper is to solve above mentioned problems for Kleinian groups.  In particular, we show that also in this case, they are identical. Cocompact examples are constructed in \cite{lakeland} and here we show that no cocompact torsion free examples exist. The difference with the Fuchsian case lies in the possible number of linear orthogonal maps $A$ which arise as one writes a hyperbolic isometry $\gamma =A\sigma$, where $\ \sigma$ is the reflection in the isometric sphere. In the Fuchsian case only one such reflections shows up, namely the reflection in the imaginary axis. In the Kleinian case, we first give a rather good description of $A$ and use this to show that in a DF domain all of this linear maps, coming from the sideparing transformations, have a common eigenvector. If the group is torsion free then the direction of this eigenvector determines an ideal vertex. Together with the results of \cite{kijusiso} this gives an algebraic characterization, in term of a set of sidepairing transformations, of the Kleinian groups having a DF domain.   A part from from solving the above mentioned  problems for  Kleinian groups, we study the symmetry of their fundamental domain and derive some consequences of the fact that the isometric spheres, in the ball models of hyperbolic space, are the Poincaré bisectors in any dimension.  

The layout of the paper is as follows. In Section~\ref{seciso} we recall fundamentals and also some results of \cite{kijusiso} that we will need in the sequel. In Section~\ref{dfdomains} we settle the DF and DC problems for Kleinian groups. In Section~\ref{ford}, we study the symmetries of a Kleininan group having a DF domain and make some considerations on Kleinian groups and the bisectors of their elements. Most of the notation used is standard or follows that introduced in \cite{kijusiso}.

%*************************Background***************************************************

\section{Poincar\'e Bisectors}
\label{seciso}

In this section we recall  basic facts on hyperbolic spaces,  fix
notation and generalize a result of \cite{kijusiso}. Standard references are \cite{beardon, bridson, 
elstrodt, gromov, ratcliffe}. By  $\HQ^n$ (respectively $\BQ^n$) we denote the upper half space (plane) (respectively the ball) model of hyperbolic $n$-space.

The hyperbolic distance $\rho$ in $\HQ^3$  is determined by     $\cosh \rho(P,P')= \delta(P,P') = 1+\frac{d(P,P')^2}{2rr'},$ where $d$ is the Euclidean distance and $P=z+rj$,  $P'=z'+r'j$
are two elements of $\ \HQ^3$.

Let $\Gamma$ be a discrete subgroup of $\  {\rm Iso}^+(\BQ^3)$. The Poincar\'e method  can be used to give a presentation of $\Gamma$ (see for example \cite{ratcliffe}).  Let $\Gamma_0$ be the stabilizer in $\Gamma$ of $\ 0\in \BQ^{3}$ and let
$\Fe_0$ a fundamental domain for $\Gamma_0$.
 For $\gamma\in {\rm Iso}^+(\BQ^3)$,  let  $D_{\gamma}(0)=\{u\in \BQ\ | \ \rho
(0,u)\leq \rho (u,\gamma (0))\}$ and $\Sigma_{\gamma} = \{u\in \BQ\ | \ \rho
(0,u)= \rho (u,\gamma (0))\}$ (see \cite{kijusiso}) .
Then $\Fe=\Fe_0\cap
(\bigcap\limits_{\gamma\in
\Gamma \setminus \Gamma_0}D_{\gamma}(0))$  is a
Poincar\'{e} fundamental domain of $\ \Gamma$ with
center $0$.     If $\Gamma_0=1$ then $0$ is in the interior of the fundamental domain.

If $S_1$ and $S_2$ are two intersecting spheres
in the extended hyperbolic space, then
$(S_1,S_2)$ denotes the cosinus of the angle at
which they intersect, the dihedral angle.  This notation is taken from \cite{beardon}. Elements $x$ and $y$ of hyperbolic space  are inverse points with respect to $S_1$ if $y=\sigma (x)$, where $\sigma$ is the reflection in $S_1$. In case $S_1=\partial \BQ^3 = S^2$, the boundary of $\ \BQ^3$, then the inverse point of $\ x$ with respect to $S_1$ is denoted by $x^*$.

\vp

Let $\gamma \in \PSL (2,\C )$, $z_0\in \BQ^3$ and $\Psi :\PSL (2,\C )\rightarrow \  {\rm Iso}^+(\BQ^3)$ an isomorphism.  One can identify ${\rm Iso}^+(\BQ^3)$ with a subgroup of two by two matrices over the quaternions over the reals (see \cite{elstrodt}). Define $\Sigma_{\Psi (\gamma)}(z_0):=\{u\in \BQ^3\ |\ \rho (z_0,u)=\rho (u,\Psi (\gamma)^{-1}(z_0))\}$. Clearly $\Sigma_{\Psi (\gamma)}(0)=\Sigma_{\Psi (\gamma)}$ and $\Psi (\gamma_1)(\Sigma_{\Psi (\gamma_1^{-1}\gamma\gamma_1)})=\Sigma_{\Psi (\gamma)}(\gamma_1(0))$. If $\Gamma$ is a Kleinian group then define $D_{\Gamma}(z_0)$ as the intersection of $\BQ^3$ and the closure of $\bigcap\limits_{\gamma \in \Gamma}{\rm Exterior} (\Sigma_{\Psi (\gamma)}(z_0))$. We have that $D_{\Gamma}= D_{\Gamma}(0)$ and $D_{\Gamma}(\gamma_1(0))=\gamma_1(D_{\gamma_1^{-1}\Gamma\gamma_1})$.

For $\gamma =\begin{pmatrix}
a & b\\
c & d
\end{pmatrix}\in M(2,\C )$,  write $a=a(\gamma ), b=b(\gamma ), c=c(\gamma )$ and $d=d(\gamma )$ when it is necessary to stress the dependence of the entries on the matrix $\gamma$. It is known that     $\Psi (\gamma )=A_{\Psi(\gamma)}\sigma_{\Psi(\gamma)}$, where $A_{\Psi(\gamma)}$ is a linear  orthogonal map and $\sigma_{\Psi(\gamma)}$ is the reflection in the isometric sphere of $\Psi (\gamma )$.

Given $z_0\in \BQ^3$, let $P_{z_0}=z_0^*$ be the inverse point of $z_0$ with respect to $S_1^2(0)$. Choose $R_{z_0}>0$ such that $1+R_{z_0}^2=\|P_{z_0}\|^2$, let $\Sigma_{z_0}=S_{R_{z_0}}(P_{z_0})$ and let $\sigma_{z_0}$ be the reflection in $\Sigma_{z_0}$. Let $W_{z_0}=span_{\R}[j,z_0]$ be the plane spanned by $j$ and $z_0$, let $A_{z_0}$ be the reflection in $W_{z_0}$ and let $\gamma_{z_0}=A_{z_0}\circ \sigma_{z_0}$. It is easily seen that $\gamma_{z_0}$ is an orientation preserving isometry of $\BQ^3$, $o(\gamma_{z_0})=2$, $\gamma_{z_0}(z_0)=z_0$ and $\Sigma{\gamma_{z_0}}=\Sigma_{z_0}$. Hence $D_{\Gamma}=\gamma_{z_0}(D_{\gamma_{z_0}\Gamma\gamma_{z_0}})$ if and only if $D_{\Gamma}=D_{\Gamma}(z_0)$ if and only if $D_{\gamma_{z_0}\Gamma\gamma_{z_0}}=\gamma_{z_0}(D_{\Gamma})$.

We next recall some results proved in \cite{kijusiso} which will be needed in the sequel. Some are just partial statements of the complete results. The first result we state identifies the Poincaré bisectors in the ball model of hyperbolic $3$-space.

\begin{theorem}
\label{isosphereB3} 
Let $\gamma \in \SL (2,\C )$ with $ \gamma\notin \SU(2,\C )$.\\ 
Then $\Sigma_{\Psi (\gamma )}= \{u \in \BQ^3 \mid  \rho
(0,u)=$ $ \rho (u, \Psi (\gamma^{-1})(0))\}$, the
bisector of the geodesic segment linking $0$ and
$\Psi (\gamma^{-1})(0)$, is the isometric sphere of $\Psi (\gamma)$. Moreover $1+\frac{1}{|C|^2}=\vert P_{\Psi (\gamma )}\vert^2$ ,
$D_{\gamma}(0)= \BQ \cap
\mbox{Exterior}(\Sigma_{\Psi(\gamma )})$ and $P_{\Psi (\gamma)}^*=\Psi (\gamma^{-1})(0)$.
\end{theorem}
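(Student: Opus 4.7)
The plan is to leverage the decomposition $\Psi(\gamma) = A_{\Psi(\gamma)}\sigma_{\Psi(\gamma)}$ already recalled in the excerpt. Write $A = A_{\Psi(\gamma)}$, which is orthogonal linear (so $A(0)=0$), and $\sigma = \sigma_{\Psi(\gamma)}$ for the reflection in the isometric sphere $S = S_R(P)$ of $\Psi(\gamma)$. Taking inverses gives $\Psi(\gamma)^{-1} = \sigma A^{-1}$, hence
\[
\Psi(\gamma^{-1})(0) \;=\; \sigma(A^{-1}(0)) \;=\; \sigma(0).
\]
This identity is the bridge between the Poincar\'e bisector appearing in the definition of $\Sigma_{\Psi(\gamma)}$ and the Euclidean sphere $S$.

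To show $S = \Sigma_{\Psi(\gamma)}$, I would first verify the inclusion $S \subseteq \Sigma_{\Psi(\gamma)}$: for any $u \in S$ one has $\sigma(u) = u$, so since $\sigma$ is a hyperbolic isometry,
\[
\rho(0,u) \;=\; \rho(\sigma(0),\sigma(u)) \;=\; \rho(\Psi(\gamma^{-1})(0),u).
\]
For the reverse inclusion I would use a sphere-uniqueness argument: the Poincar\'e bisector of two distinct points in $\BQ^3$ is a hyperbolic plane, realized in the ball model as a Euclidean $2$-sphere (or a plane through $0$) meeting $\partial\BQ^3$ orthogonally; the same is true of $S$; two such $2$-spheres, one containing the other, must coincide.

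Once $S = \Sigma_{\Psi(\gamma)}$ is in place the numerical claims follow quickly. Orthogonality of $S$ with $\partial\BQ^3 = S^2$ forces $|P|^2 - R^2 = 1$, which is exactly the defining relation for $P = P_{\Psi(\gamma)}$ and $R = R_{\Psi(\gamma)}$ recalled in the excerpt. The explicit quaternionic matrix form of $\Psi$ (following Elstrodt) then gives the radius $R = 1/|c|$, yielding $|P|^2 = 1 + 1/|c|^2$. For the inverse-point identity I would compute the Euclidean reflection of the origin in $S_R(P)$:
\[
\sigma(0) \;=\; P + R^2\frac{0-P}{|0-P|^2} \;=\; \frac{|P|^2 - R^2}{|P|^2}\,P \;=\; \frac{P}{|P|^2} \;=\; P^*,
\]
and combine with $\sigma(0) = \Psi(\gamma^{-1})(0)$ to conclude $P^* = \Psi(\gamma^{-1})(0)$. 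The description $D_\gamma(0) = \BQ \cap \text{Exterior}(\Sigma_{\Psi(\gamma)})$ is then immediate: the defining inequality degenerates to equality exactly on $\Sigma_{\Psi(\gamma)} = S$, and the basepoint $0$ lies outside $S$ since $|P| > R$, so the half-space selected is the exterior component.

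The main obstacle is really just the explicit identification $R = 1/|c|$, a calculation in the quaternionic matrix model rather than a geometric argument; once the decomposition $\Psi(\gamma) = A\sigma$ is in hand, the rest of the theorem reduces to the single observation $\Psi(\gamma^{-1})(0) = \sigma(0)$ combined with a routine Euclidean inversion formula.
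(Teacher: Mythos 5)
Your proposal is correct and follows essentially the same route the paper takes: Theorem~\ref{isosphereB3} itself is only quoted from \cite{kijusiso}, but the paper's own proof of the $n$-dimensional generalization (Theorem~\ref{geral}) is exactly your key step --- write $\Psi(\gamma)=A\sigma$ with $A$ orthogonal, deduce $\Psi(\gamma^{-1})(0)=\sigma(0)$, and conclude that $0$ and $\sigma(0)$ are inverse points with respect to the isometric sphere, which is therefore the bisector. Your added details (the fixed-point inclusion, the sphere-uniqueness step, and the computations $|P|^2=1+R^2=1+1/|C|^2$ and $\sigma(0)=P/|P|^2=P^*$) correctly fill in what the paper leaves implicit and match the formulas recorded in Proposition~\ref{isogammaballmodel}.
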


Using this result and the theory of hyperbolic spaces one gets explicit formulas for the Poincaré bisectors in the upper  half space model. In fact, we have the following results from \cite{kijusiso}.
  
\begin{proposition}
\label{isogammaupmodel}

Let $\gamma =
\left(
\begin{array}{ll}
a & b \\ c & d
\end{array}
\right) \in \SL(2,\C )$ and $\Sigma_{\gamma} = \eta_0^{-1}(\Sigma_{\Psi (\gamma)})$, where $\eta_0:\HQ^3\rightarrow \BQ^3$ is an isometry between the models (see \cite{elstrodt}). 
\begin{enumerate}
\item
$\Sigma_{\gamma} $ is an Euclidean sphere if and
only if $|a|^2+|c|^2 \neq 1$. In this case, its
center  and its radius are respectively given by
$P_{\gamma}=\frac{-(\overline{a}b+\overline{c}d)}{|a|^2+|c|^2-1}$,
$R^2_{\gamma}=\frac{1+\|P_{\gamma}\|^2}{|a|^2+|c|^2}$.
\item
$\Sigma_{\gamma}$ is a plane if and only if
$|a|^2+|c|^2 = 1$. In this case
$Re(\overline{v}z)+\frac{|v|^2}{2}=0, z\in \C$ is
a  defining equation of $\ \Sigma_{\gamma}$,  where
$v=\overline{a}b+\overline{c}d$.
\item $|\overline{a}b+\overline{c}d|^2= (|a|^2+|c|^2)(|b|^2+|d|^2)-1$
\item Suppose $c\neq 0$. Then $|\hat{P}_{\gamma}-P_{\gamma}|=\frac{|d-\overline{a}|}{|c|(|a|^2+|c|^2-1)}$. Moreover 
$\ \Iso_{\gamma}=\Sigma_{\gamma}$ if and only if
$d=\overline{a}$. In this
case we also have that $c=\lambda \overline{b}$,
with $\lambda \in \R$. If $c=0$ and $\infty \in \Sigma_{\gamma}$ then the same conclusion holds.
\item  Suppose that  $\ ISO_{\gamma}=$ $\Sigma_{\gamma}$ or that  $c=0$ and $\infty \in \Sigma_{\gamma}$.  Then 
$\tr (\gamma )\in \R$

\end{enumerate}
\end{proposition}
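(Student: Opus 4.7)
My plan is to reduce everything to the ball-model Poincar\'e bisector from Theorem~\ref{isosphereB3} and then translate, via $\eta_0$, into explicit coordinates on $\HQ^3$. With $\eta_0^{-1}(0)=j$, the equation defining $\Sigma_\gamma$ becomes $\rho(P,j)=\rho(P,\gamma^{-1}(j))$ for $P=z+rj\in\HQ^3$. First I would compute $\gamma^{-1}(j)$ explicitly via the standard $\PSL(2,\C)$-action on $\HQ^3$:
\[
\gamma^{-1}(j) \;=\; \frac{-(\bar{a}b+\bar{c}d)}{|a|^2+|c|^2}+\frac{1}{|a|^2+|c|^2}\,j \;=:\; z_1+r_1 j.
\]
Using the formula for $\cosh\rho$ recalled in the excerpt and clearing denominators, the bisector equation reduces to
\[
r_1(|z|^2+r^2+1)\;=\;|z-z_1|^2+r^2+r_1^2.
\]

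Parts (1) and (2) are then extracted by rearranging this identity. When $r_1\neq 1$, i.e.\ $|a|^2+|c|^2\neq 1$, I would divide by $r_1-1$ and complete the square in $z$, producing an equation of the form $|z-P_\gamma|^2+r^2=R_\gamma^2$; setting $P_\gamma=-z_1/(r_1-1)$ yields the stated center formula, and collecting the constants gives $R_\gamma^2=r_1(1+|P_\gamma|^2)$, exactly the radius formula. When $r_1=1$ the $|z|^2$ and $r^2$ terms cancel and we are left with $\mathrm{Re}(\bar{v}z)+|v|^2/2=0$ with $v=\bar{a}b+\bar{c}d$, the plane equation of (2). Part (3) is a purely algebraic identity: direct expansion shows $(|a|^2+|c|^2)(|b|^2+|d|^2)-|\bar{a}b+\bar{c}d|^2=|ad-bc|^2$, which equals $1$ since $\det\gamma=1$.

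For (4), the classical isometric sphere $\Iso_\gamma$ has center $\hat{P}_\gamma=-d/c$ and radius $1/|c|$. Writing
\[
\hat{P}_\gamma-P_\gamma \;=\; \frac{-d(|a|^2+|c|^2-1)+c(\bar{a}b+\bar{c}d)}{c(|a|^2+|c|^2-1)}
\]
and applying $\bar{a}(ad-bc)=\bar{a}$, equivalently $|a|^2d-\bar{a}bc=\bar{a}$, collapses the numerator to $d-\bar{a}$, giving the stated distance formula. Setting $\hat{P}_\gamma=P_\gamma$ then forces $d=\bar{a}$; substituting back into $R_\gamma^2=(1+|P_\gamma|^2)/(|a|^2+|c|^2)$ confirms $R_\gamma=1/|c|$, so $\Iso_\gamma=\Sigma_\gamma$. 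The relation $c=\lambda\bar{b}$ with $\lambda\in\R$ follows from $bc=ad-1=|a|^2-1\in\R$, so $c/\bar{b}\in\R$ whenever $b\neq 0$. For the complementary case $c=0$ with $\infty\in\Sigma_\gamma$, a Euclidean sphere of finite radius cannot contain $\infty$, so by (1)--(2) we must be in the plane case $|a|^2+|c|^2=1$; with $c=0$ this forces $|a|=1$, whence $d=1/a=\bar{a}$.

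Part (5) is then immediate in both scenarios: $d=\bar{a}$ yields $\tr\gamma=a+\bar{a}=2\,\mathrm{Re}(a)\in\R$. The main obstacle I anticipate is spotting the algebraic trick in (4)---multiplying the determinant identity $ad-bc=1$ by $\bar{a}$ to rewrite the numerator of $\hat{P}_\gamma-P_\gamma$ as the clean expression $d-\bar{a}$; once this is noticed, the remaining work is straightforward rearrangement leveraging $|ad-bc|^2=1$.
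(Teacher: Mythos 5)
Your derivation is correct and follows exactly the route the paper indicates: the paper itself supplies no proof of Proposition~\ref{isogammaupmodel} (it is quoted from \cite{kijusiso}, "using Theorem~\ref{isosphereB3} and the theory of hyperbolic spaces"), and pulling the ball-model bisector back to $\HQ^3$ via $\eta_0$, writing the bisector condition with the $\cosh\rho$ formula, and completing the square is precisely that computation; your formulas for $\gamma^{-1}(j)$, $P_\gamma$, $R_\gamma^2$, the plane case, the Lagrange identity in (3), and the numerator collapse $-d(|a|^2+|c|^2-1)+c(\bar a b+\bar c d)=d-\bar a$ all check out. The one caveat, which you implicitly flag with "whenever $b\neq 0$", is the degenerate subcase $b=0$ in item (4): there $bc=|a|^2-1\in\R$ carries no information and $c=\lambda\bar b$ can genuinely fail (e.g.\ $a=d=1$, $b=0$, $c=1$ has $\Iso_\gamma=\Sigma_\gamma$), but this is a defect of the statement as printed rather than of your argument.
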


\begin{proposition}
\label{isogammaballmodel}
Let $\gamma = \left(
\begin{array}{ll}
a & b \\ c & d
\end{array}
\right)\in \SL (2,\C )$ and  $\Psi (\gamma ) =
\left(
\begin{array}{ll}
A & C^{\prime} \\ C & A^{\prime}
\end{array}
\right)$.
% acting on $\BQ^3$.
Then the following properties hold.
\begin{enumerate}
\item $\Iso_{\Psi (\gamma )}=\Sigma_{\Psi (\gamma )}$
\item $|A|^2=\frac{2+\|\gamma\|^2}{4}$, $|C|^2=\frac{\|\gamma\|^2-2}{4}$ and $|A|^2-|C|^2=1$
\item
$P_{\Psi (\gamma )}=
\frac{1}{-2+\|\gamma\|^2}\cdot [\
-2(\overline{a}b+\overline{c}d) +
[(|b|^2+|d|^2)-(|a|^2+|c|^2)]j\ ]$
\item
$\Psi (\gamma^{-1})(0)= P_{\Psi (\gamma )}^*=
\frac{1}{2+\|\gamma\|^2}\cdot [\
-2(\overline{a}b+\overline{c}d) +
[(|b|^2+|d|^2)-(|a|^2+|c|^2)]j\ ]$ (notation of 
inverse point w.r.t. $S^2$).
\item
$\|P_{\Psi (\gamma )}\|^2 = \frac{2+\|\gamma\|^2}{-2+\|\gamma\|^2}$
\item
$ R_{\Psi (\gamma )}^2= \frac{4}{-2+\|\gamma\|^2}$

\item $\Sigma_{\Psi (\gamma)} = \Sigma_{\Psi (\gamma_1)}$ if and only if $\gamma_1=\gamma_0\gamma$, $\gamma_0\in \SU(2,\C)$.
\end{enumerate}
\end{proposition}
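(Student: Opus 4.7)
The plan is to derive all seven statements from a single explicit description of the quaternionic matrix $\Psi(\gamma)$ combined with Theorem~\ref{isosphereB3}. Using the Cayley-type isometry $\eta_0:\HQ^3\to\BQ^3$ from \cite{elstrodt}, I would compute $\Psi(\gamma)=\eta_0\gamma\eta_0^{-1}$ and obtain closed forms for the entries $A,C,A',C'$ as quaternions linear in $a,b,c,d$ (so that $A'=\bar{A}$ and $C'=\bar{C}$ under the relevant conjugation, reflecting the fact that $\Psi(\gamma)$ preserves $\BQ^3$). A direct norm calculation then gives $|A|^2+|C|^2 = \|\gamma\|^2/2$, while the symplectic-type condition satisfied by $\Psi(\gamma)$ (equivalently, the preservation of the unit sphere) forces $|A|^2-|C|^2 = 1$. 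Solving this pair yields the formulas of part~(2). Part~(1) is immediate, since Theorem~\ref{isosphereB3} already identifies the bisector $\Sigma_{\Psi(\gamma)}$ with the isometric sphere $\Iso_{\Psi(\gamma)}$ in the ball model.

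For part~(4), I would evaluate the M\"obius action $z\mapsto (Az+C')(Cz+A')^{-1}$ at $z=0$ applied to $\Psi(\gamma^{-1})$, whose entries are obtained from those of $\Psi(\gamma)$ by the standard inversion (swap/negate) on $\SL(2,\C)$. Using $|A'|^2=(2+\|\gamma\|^2)/4$ from part~(2) to invert $A'$, and carefully tracking the quaternionic products, the result simplifies to the stated expression. Parts (5) and (6) follow quickly: Theorem~\ref{isosphereB3} gives $\|P_{\Psi(\gamma)}\|^2=1+1/|C|^2$, and substituting part~(2) produces (5); then the defining relation $1+R_{\Psi(\gamma)}^2=\|P_{\Psi(\gamma)}\|^2$ for the isometric sphere yields $R_{\Psi(\gamma)}^2 = 1/|C|^2$, which is (6). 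Part~(3) is now purely formal: since $P_{\Psi(\gamma)}^*=\Psi(\gamma^{-1})(0)$ by Theorem~\ref{isosphereB3} and inversion in $S^2$ sends $x$ to $x/|x|^2$, one has $P_{\Psi(\gamma)} = \|P_{\Psi(\gamma)}\|^2\cdot\Psi(\gamma^{-1})(0)$, and combining (4) with (5) gives the formula in (3).

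Part~(7) is a stabilizer argument. By Theorem~\ref{isosphereB3}, $\Sigma_{\Psi(\gamma)}$ is the perpendicular bisector of the geodesic segment from $0$ to $\Psi(\gamma^{-1})(0)$, so it is entirely determined by this second endpoint. Hence $\Sigma_{\Psi(\gamma)}=\Sigma_{\Psi(\gamma_1)}$ iff $\Psi(\gamma^{-1})(0)=\Psi(\gamma_1^{-1})(0)$, iff $\gamma_1\gamma^{-1}$ stabilizes $0$, iff $\gamma_1\gamma^{-1}\in\Psi^{-1}(\Gamma_0)=\SU(2,\C)$; writing $\gamma_0:=\gamma_1\gamma^{-1}$ gives $\gamma_1=\gamma_0\gamma$ as required. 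The one genuine obstacle is the non-commutative quaternionic bookkeeping in the explicit form of $\Psi(\gamma)$ and in the evaluation leading to part~(4); once those computations are performed cleanly, everything else is either a direct invocation of Theorem~\ref{isosphereB3} or an algebraic rearrangement of part~(2).
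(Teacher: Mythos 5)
Your proposal is correct, but note that this paper contains no proof to compare it with: Proposition~\ref{isogammaballmodel} is imported verbatim from the reference \cite{kijusiso} (``we have the following results from \cite{kijusiso}''), so the argument has to be reconstructed exactly as you do. Your outline is the standard one and it works: writing $\Psi(\gamma)=\eta_0\gamma\eta_0^{-1}$ gives $A=\frac{1}{2}[(a+\overline{d})+(b-\overline{c})j]$, $C=\frac{1}{2}[(c+\overline{b})+(d-\overline{a})j]$, etc., from which $|A|^2+|C|^2=\|\gamma\|^2/2$ and $|A|^2-|C|^2=\mbox{Re}(ad-bc)\cdot 2/2=1$ follow, and the quaternionic product $C'_1\overline{A'_1}/|A'_1|^2$ for $\gamma^{-1}$ does simplify to the displayed expression in item (4) using $ad-bc=1$; items (1), (3), (5), (6) and (7) then reduce to Theorem~\ref{isosphereB3} and the relations $P^*=P/\|P\|^2$, $\|P\|^2=1+1/|C|^2$, $R^2=1/|C|^2$ exactly as you say. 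The only inaccuracy is your parenthetical claim that $A'=\overline{A}$ and $C'=\overline{C}$ under quaternion conjugation; the correct relation involves the composite of conjugation with $z+wj\mapsto z+\overline{w}j$, but since all you use is $|A'|=|A|$ this does not affect the argument.
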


 We will need to calculate the dihedral angle between two bisectors. For this we state the following result also obtained in \cite{kijusiso}.

\begin{lemma}
\label{angles}
Let $\gamma= \left(
\begin{array}{ll}
a & b \\ c & d
\end{array}
\right)\in  \SL (2, \C )$, $\pi_0 =\partial \HQ^n, \ n=2,3$, 
and $\theta$ the angle between $\Sigma_{\Psi
(\gamma )}$ and $\Sigma = \Sigma_{\Psi
(\gamma_1  )}$ with $\Sigma \cap \Sigma_{\Psi
(\gamma )} \neq \emptyset$. Then    $\cos (\theta ) = \frac{|1-\langle P_{\Psi (\gamma)}|  P_{\Psi
(\gamma_1)} \rangle |}{R_{\Psi (\gamma_1)}\cdot R_{\Psi
(\gamma)}}$.
\end{lemma}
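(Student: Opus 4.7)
The plan is to reduce the computation to elementary Euclidean geometry. By Proposition~\ref{isogammaballmodel}, the bisectors $\Sigma_{\Psi(\gamma)}$ and $\Sigma_{\Psi(\gamma_1)}$ are Euclidean spheres; write $P_1,P_2$ for their centers and $R_1,R_2$ for their radii, and pick any point $x$ in their (nonempty) intersection. The dihedral angle at $x$ coincides with the angle between the tangent planes there, which is (up to replacing an angle by its supplement) the angle $\alpha$ between the outward normals $(x-P_1)/R_1$ and $(x-P_2)/R_2$.

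The main step is the Euclidean law of cosines applied to the triangle with vertices $P_1$, $P_2$, $x$, whose sides have lengths $R_1$, $R_2$ and $d:=|P_1-P_2|$:
$$\cos\alpha=\frac{R_1^{2}+R_2^{2}-d^{2}}{2R_1 R_2}.$$
The key algebraic ingredient is the identity $|P_i|^{2}=1+R_i^{2}$, supplied by Theorem~\ref{isosphereB3}; geometrically this is the statement that isometric spheres in the ball model are orthogonal to $\partial\BQ^{3}$. Expanding $d^{2}=|P_1|^{2}+|P_2|^{2}-2\langle P_1|P_2\rangle$ and substituting collapses the numerator to $2(\langle P_1|P_2\rangle-1)$, so that $\cos\alpha=(\langle P_1|P_2\rangle-1)/(R_1 R_2)$. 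Passing to absolute values then yields the formula in the statement, and in particular shows that the right-hand side is independent of the chosen intersection point $x$.

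I do not anticipate any serious obstacle. The only subtlety is the sign ambiguity between $\alpha$ and $\pi-\alpha$, corresponding to the two complementary dihedral angles formed by the intersecting spheres; this is precisely what the modulus bars in the statement absorb. The argument depends only on the orthogonality of the isometric spheres to $\partial\BQ^{n}$ and on the Euclidean law of cosines, so it goes through verbatim in both dimensions $n=2,3$ covered by the lemma.
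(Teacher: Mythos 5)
Your proof is correct. Note that the paper itself gives no argument for Lemma~\ref{angles}: it is quoted from \cite{kijusiso}, so there is no internal proof to compare against. Your derivation is the standard inversive-product computation, and it is exactly the one suggested by the paper's own setup: the quantity $(S_1,S_2)$ introduced in Section~\ref{seciso} (following Beardon) is computed for two spheres $S_{R_1}(P_1)$, $S_{R_2}(P_2)$ by the law of cosines at an intersection point, and the normalization $\|P_{\Psi(\gamma)}\|^2=1+R_{\Psi(\gamma)}^2$ (equivalently, orthogonality of the bisectors to $\partial\BQ^n$, which you correctly extract from Theorem~\ref{isosphereB3} and item (5)--(6) of Proposition~\ref{isogammaballmodel}) collapses $R_1^2+R_2^2-d^2$ to $2(\langle P_1|P_2\rangle-1)$. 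Your handling of the two subtleties is also right: the modulus absorbs the choice between the two supplementary dihedral angles, and the independence of the intersection point is automatic since the final expression involves only $R_1$, $R_2$ and $\langle P_1|P_2\rangle$. One small point worth making explicit: in the ball model a non-unitary $\Psi(\gamma)$ always has $|C|\neq 0$, so the bisector is genuinely a Euclidean sphere and the planar degenerate case of Proposition~\ref{isogammaupmodel} does not arise; this justifies your opening reduction.
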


Our next result proves that  \cite [Theorem 3.1]{kijusiso} holds in all dimensions.  With this result at hand we can now work in both models and carry over  information in a simple way.

\begin{theorem}
\label{geral}

Let $\gamma$ be an orientation preserving isometry of $\ \BQ^n$ and let $\Sigma_{\gamma}$be its isometric sphere. Then $\Sigma{\gamma}$ is the bisector of geodesic linking the origin $0$ and $\gamma^{-1}(0)$.

\end{theorem}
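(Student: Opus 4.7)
The plan is to mirror, in arbitrary dimension, the proof of \cite[Theorem 3.1]{kijusiso} that was given for $n=2,3$. The only ingredient required is the standard Poincar\'e-style decomposition $\gamma = A\,\sigma$ available for every orientation preserving isometry of $\BQ^n$ that does not fix the origin, where $\sigma$ is the Euclidean reflection in the isometric sphere $\Sigma_\gamma$ (so $\sigma$ fixes $\Sigma_\gamma$ pointwise) and $A$ is a linear orthogonal map. Since $A$ is linear and orthogonal it preserves the Euclidean distance and norm, and the explicit form of $\rho$ in the ball model depends only on these quantities, so $A$ is itself a hyperbolic isometry fixing $0$.

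Given any $x\in \Sigma_\gamma$, the identity $\sigma(x)=x$ forces $\gamma(x)=A(x)$. A two-line computation then yields
\[
\rho\bigl(x,\gamma^{-1}(0)\bigr) \;=\; \rho\bigl(\gamma(x),0\bigr) \;=\; \rho\bigl(A(x),0\bigr) \;=\; \rho(x,0),
\]
where the first equality uses that $\gamma$ is a hyperbolic isometry, and the last uses that $A$ fixes $0$ and preserves $\rho$. Hence every point of $\Sigma_\gamma$ is equidistant from $0$ and $\gamma^{-1}(0)$, i.e.\ $\Sigma_\gamma$ is contained in the bisector of the geodesic segment joining these two points.

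To upgrade this inclusion to an equality, one uses the geometric fact that the bisector of two points in $\BQ^n$ is a totally geodesic hyperplane, which in the ball model is realized as a Euclidean $(n-1)$-sphere (or affine hyperplane) orthogonal to $\partial \BQ^n$. Since $\Sigma_\gamma$ is itself a Euclidean $(n-1)$-sphere contained in this bisector, and a Euclidean $(n-1)$-sphere in $\R^n$ cannot be properly contained in another Euclidean $(n-1)$-sphere or affine hyperplane, the two coincide.

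The only place where dimension genuinely enters is the validity of the decomposition $\gamma=A\sigma$ and the identification of $A$ as an orthogonal linear map; these are standard facts from the general theory of M\"obius transformations of $\BQ^n$ and will be quoted from \cite{beardon, ratcliffe}. Beyond this bookkeeping I do not foresee any serious obstacle, since the two-three dimensional proof in \cite{kijusiso} relied on coordinate formulas that are entirely bypassed by the above abstract argument.
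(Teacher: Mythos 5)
Your proof is correct and follows essentially the same route as the paper: both rest on the decomposition $\gamma=A\sigma$ with $A$ a linear orthogonal map (hence a hyperbolic isometry fixing $0$) and $\sigma$ the reflection in $\Sigma_{\gamma}$. The only difference is in the last step: the paper computes $\gamma^{-1}(0)=\sigma A^{-1}(0)=\sigma(0)$, so that $0$ and $\gamma^{-1}(0)$ are inverse points with respect to $\Sigma_{\gamma}$ and the bisector property is immediate, whereas you verify equidistance pointwise on $\Sigma_{\gamma}$ and then upgrade the inclusion to an equality of spheres; both versions are sound.
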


\begin{proof} Write $\gamma=A\sigma$, where $A$ is an orthogonal map and $\sigma$ the reflection in $\Sigma_{\gamma}$. Then $\gamma^{-1}(0)=\sigma A^{-1}(0)=\sigma (0)$ and hence $0$ and $\sigma (0)$ are inverse point with respect to $\Sigma_{\gamma}$. From this it follows that $\Sigma_{\gamma}$ is the bisector of the geodesic linking $0$ and $\sigma (0)$.\end{proof}

When looking for relations it is necessary to know the position of the bisectors relative to one another. In this direction it is easy to see that if  $\gamma,\gamma_1\in \PSL (2,\C)$ are non-unitary and  that $\gamma\gamma_1$ is also non-unitary  then  $\gamma_1^{-1}(\Sigma_{\gamma})\cap \Sigma_{\gamma\gamma_1} =\Sigma_{\gamma}\cap \Sigma_{\gamma_1}$. From the latter it follows that  $\gamma_1(\gamma_1(\Sigma_{\gamma})\cap \Sigma_{\gamma\gamma_1})= \Sigma_{\gamma_1^{-1}}\cap \Sigma_{\gamma}$. The Poincaré Theory tells us how to find relations.

%*******************************************************************

\section{DF and DC Domains}
\label{dfdomains}

In this section we consider Kleinian groups which have a double Dirichlet domain or a Dirichlet-Ford domain. In particular, we settle a question on these groups raised in \cite{lakeland}.

Let $\Gamma$ be a Kleinian group and $\gamma \in \Gamma$. Write $\gamma = A_{\gamma}\sigma_{\gamma}$ where $\sigma_{\gamma}$ is the reflection in $\Sigma_{\gamma}$. Note that we have that $A_{\gamma}(\Sigma_{\gamma})=\Sigma_{\gamma^{-1}}$. Since $j$ and $\gamma^{-1}(j)$ are inverse points with respect to $\Sigma_{\gamma}$ we have that $A_{\gamma}(j)=j$.

\begin{lemma}
\label{Agamma}
Let $\gamma \in \Gamma$. Then the following hold.

\begin{enumerate}

\item $A_{\gamma} (j) =j$

\item  $A_{\gamma} (P_{\gamma})=\hat{P}_{\gamma}$

\item $A_{\gamma} (\infty )= \frac{b(\gamma)+\overline{c(\gamma)}}{d(\gamma)-\overline{a(\gamma)}}$, if $d(\gamma)\neq \overline{a(\gamma)}$

\item $A_{\gamma} (0)=\frac{b(|a|^2+|c|^2-)-\overline{c}(|b|^2+|d|^2-1)}{d(|a|^2+|c|^2-)+\overline{a}(|b|^2+|d|^2-1)}$

\item If $d(\gamma)=\overline{a(\gamma)}$ and $\gamma \notin \Gamma_j$ then $A_{\gamma} (\infty )=\infty$ and $A_{\gamma} (0 )=0$.

\end{enumerate}
\end{lemma}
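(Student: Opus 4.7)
The common strategy is to use $\sigma_\gamma^2 = \mathrm{id}$ to rewrite $\gamma = A_\gamma\sigma_\gamma$ as $A_\gamma = \gamma\sigma_\gamma$, so that evaluating $A_\gamma$ at any point $x$ reduces to first computing the Euclidean reflection $\sigma_\gamma(x)$ in the sphere $\Sigma_\gamma$ of center $P_\gamma$ and radius $R_\gamma$, and then applying the M\"obius action of $\gamma$.

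For (1), since $j$ and $\gamma^{-1}(j)$ are inverse points with respect to $\Sigma_\gamma$, we have $\sigma_\gamma(j) = \gamma^{-1}(j)$ and so $A_\gamma(j) = \gamma(\gamma^{-1}(j)) = j$. For (2), the reflection $\sigma_\gamma$ interchanges its center $P_\gamma$ with $\infty$, hence $A_\gamma(P_\gamma) = \gamma(\infty) = \hat P_\gamma$. For (3), analogously $\sigma_\gamma(\infty) = P_\gamma$; substituting $P_\gamma = -(\bar a b+\bar c d)/(|a|^2+|c|^2-1)$ from Proposition~\ref{isogammaupmodel} into $(az+b)/(cz+d)$ and using $ad-bc = 1$, the numerator $aP_\gamma + b$ collapses to $-(b+\bar c)/(|a|^2+|c|^2-1)$ and the denominator $cP_\gamma + d$ to $(\bar a - d)/(|a|^2+|c|^2-1)$, giving $A_\gamma(\infty) = (b+\bar c)/(d-\bar a)$.

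For (4), the Euclidean formula for reflection in a sphere gives $\sigma_\gamma(0) = P_\gamma(|P_\gamma|^2 - R_\gamma^2)/|P_\gamma|^2$; combined with $R_\gamma^2 = (1+|P_\gamma|^2)/(|a|^2+|c|^2)$ from Proposition~\ref{isogammaupmodel}, this is a concrete rational function of $a, b, c, d$. Applying $\gamma$ and systematically using $ad-bc = 1$ to eliminate mixed terms produces the claimed expression, in which the symmetry between the factors $|a|^2+|c|^2-1$ and $|b|^2+|d|^2-1$ reflects the duality between $P_\gamma$ and $P_{\gamma^{-1}}$.

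For (5), substitute $d = \bar a$ into the formulas of (3) and (4). Proposition~\ref{isogammaupmodel}(4) then gives $c = \lambda\bar b$ with $\lambda \in \mathbb R$; the hypothesis $\gamma \notin \Gamma_j$ excludes $\gamma \in \mathrm{SU}(2,\mathbb C)$ and therefore forces both $b \neq 0$ and $\lambda \neq -1$. In (3) the denominator $d-\bar a$ vanishes while the numerator $b+\bar c = b(1+\lambda)$ does not, so $A_\gamma(\infty) = \infty$. The identity $ad-bc = 1$ becomes $|a|^2 - 1 = \lambda|b|^2$, whence $|a|^2+|c|^2-1 = \lambda|b|^2(1+\lambda)$ and $|b|^2+|d|^2-1 = |b|^2(1+\lambda)$; substituting into the formula of (4) makes the numerator collapse to $0$ while keeping the denominator non-zero, yielding $A_\gamma(0) = 0$. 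The principal technical burden throughout is the bookkeeping in (4); every other item reduces to substituting the formula for $P_\gamma$ and applying $ad-bc = 1$.
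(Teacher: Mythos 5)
Your treatment of items (1)--(4) is essentially the paper's own argument: the paper likewise writes $A_\gamma=\gamma\sigma_\gamma$, uses the inverse-point property for (1), $\sigma_\gamma(P_\gamma)=\infty$ for (2), $\sigma_\gamma(\infty)=P_\gamma$ for (3), and the Euclidean formula $\sigma_\gamma(0)=\frac{\|P_\gamma\|^2-R_\gamma^2}{\|P_\gamma\|^2}P_\gamma$ for (4), so those parts are fine.

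The gap is in item (5). Your strategy there is to substitute $d=\overline a$ and $c=\lambda\overline b$ into the formulas of (3) and (4), but those formulas are only derived under the assumption that $\Sigma_\gamma$ is a Euclidean sphere, i.e.\ $|a|^2+|c|^2\neq 1$, since their proofs use the center $P_\gamma$ and radius $R_\gamma$. When $d=\overline a$ and $c=0$ (which forces $|a|=1$, hence $|a|^2+|c|^2=1$), the bisector $\Sigma_\gamma$ is a vertical plane, $P_\gamma$ and $R_\gamma$ do not exist, and neither formula applies; plugging $c=0$, $\lambda=0$ into them is only a formal manipulation, not a proof. This sub-case is not vacuous --- it contains exactly the parabolic translations $\begin{pmatrix}a&b\\0&\overline a\end{pmatrix}$ with $|a|=1$, $b\neq 0$, which are the prototypical side-pairings of a DF domain with an ideal vertex at $\infty$ and are essential for Theorem~\ref{dfdomain}. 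The paper closes this case separately: for a vertical plane one has $\sigma_\gamma(\infty)=\infty$, so $A_\gamma(\infty)=\gamma(\infty)=\infty$, and from the defining equation of $\Sigma_\gamma$ one computes $\sigma_\gamma(0)=-(\overline a b+\overline c d)=-bd$, whence $A_\gamma(0)=\gamma(-bd)=0$ using $ad=1$. Your argument for the sub-case $c\neq 0$ is correct and in fact more uniform than the paper's (which further splits off $d=0$), including the correct use of $\gamma\notin\Gamma_j$ to rule out $\lambda=-1$ and $b=0$; you just need to add the plane computation to cover $c=0$.
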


\begin{proof} The first item, as seen above, is obvious. We have that $A_{\gamma}(P_{\gamma})= \gamma\circ \sigma_{\gamma}(P_{\gamma})=\gamma (\infty ) = \hat{P}_{\gamma}$. To prove the third item notice  that $A_{\gamma} (\infty )= \gamma (P_{\gamma})$. Using the expression of $P_{\gamma}$ and that  $\det (\gamma )=1$, the third item follows. To prove the fourth item, note that $\sigma_{\gamma}(P)=P_{\gamma}+\frac{R_{\gamma}^2}{\|P-P_{\gamma}\|}(P-P_{\gamma})$,   $\sigma_{\gamma}(0)=\frac{\|P_{\gamma}\|^2-R_{\gamma}^2}{\|P_{\gamma}\|^2}\cdot P_{\gamma}$ and hence $A_{\gamma}(0) =\gamma (\frac{\|P_{\gamma}\|^2-R_{\gamma}^2}{\|P_{\gamma}\|^2}\cdot P_{\gamma})$. From this the fourth  item follows easily. 

We now prove the last item. We have to consider all possible situations but apart from this the proof is straightforward. 

We first suppose that $d=d(\gamma)=0$. In this case $c=c(\gamma)\neq 0$. If $\gamma \not\in \Gamma_j$ then we have that $P_{\gamma}=\hat{P}_{\gamma}=0$ and $\sigma_{\gamma}(\infty)=0$. From this we have that $A_{\gamma}(\infty)=\gamma (0)=\infty$ and $A_{\gamma}(0)=\gamma (\infty)=0$.

If $d\neq 0$ and $c\neq 0$ then $P_{\gamma}=\hat{P}_{\gamma}$ and $\sigma_{\gamma}(P)=P_{\gamma}+\frac{R_{\gamma}^2}{\|P-P_{\gamma}\|}(P-P_{\gamma})$. From this we have that $A_{\gamma}(0)=\gamma (\frac{1-|d|^2}{c\overline{d}})=0$ and $A_{\gamma}(\infty)=\gamma (\hat{P}_{\gamma})=\infty$.

If $d\neq 0$ and $c= 0$ then $\Iso_{\gamma}$ does not exist, $|a|=1$ and hence $\Sigma_{\gamma}$ is a vertical plane. Using the defining equation of $\ \Sigma_{\gamma}$ we have that $\sigma_{\gamma}(0)=-(b\overline{a}+d\overline{c})=-bd$. Hence $A_{\gamma}(0)=\gamma (-bd)=0$ and $A_{\gamma}(\infty)=\gamma (\infty)=\infty$.\end{proof}

Observe that $A_{\gamma}=\eta_0^{-1}\circ A_{\Psi (\gamma)}\circ\eta_0$ and $\sigma_{\gamma}=\eta_0^{-1}\circ \sigma_{\Psi (\gamma)}\circ\eta_0$. Hence, working in $\BQ^3$, we see that $A_{\gamma^{-1}}=A_{\gamma}^{-1}$ and $\sigma_{\gamma^{-1}}=A_{\gamma}\circ \sigma_{\gamma}\circ A_{\gamma}^{-1}$.

\vp

\begin{theorem}
\label{dfdomain}

Let $\gamma$ be a Kleinian group. The following statements are equivalent.

\begin{enumerate}
\item There exist $z_0\neq z_1$ such that $D_{\Gamma}(z_0)=D_{\Gamma}(z_1)$.
\item $\Gamma$ has a Dirichlet-Ford domain.
\end{enumerate}

Moreover, if  $\ \Gamma$ is torsion free then it is not cocompact.

\end{theorem}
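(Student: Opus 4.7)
The plan is to translate both conditions to the upper half-space model, where after a preliminary conjugation the Dirichlet center can be taken to be $j$: condition (2) then reads that every side-pairing $\gamma$ satisfies $\Sigma_\gamma=\Iso_\gamma$, which by Proposition~\ref{isogammaupmodel}(4) is equivalent to $d(\gamma)=\overline{a(\gamma)}$, while condition (1) reads $D_\Gamma(j)=D_\Gamma(z_1)$ for some interior $z_1\neq j$. In both directions I will exploit the decomposition $\gamma=A_\gamma\sigma_\gamma$ together with Lemma~\ref{Agamma}.

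For (2)$\Rightarrow$(1), DF gives $d(\gamma)=\overline{a(\gamma)}$ for every side-pairing $\gamma$, so by Lemma~\ref{Agamma}(5) the orthogonal factor $A_\gamma$ fixes both $0$ and $\infty$; since $A_\gamma$ also fixes $j$, it fixes the entire vertical geodesic pointwise. Choosing any $z_1\neq j$ on this axis one has $\gamma^{-1}(z_1)=\sigma_\gamma A_\gamma^{-1}(z_1)=\sigma_\gamma(z_1)$, so $\Sigma_\gamma$ is also the bisector from $z_1$, and hence $D_\Gamma(z_1)=D_\Gamma(j)$. Conversely, for (1)$\Rightarrow$(2), conjugate so that $z_0=j$ and $z_1$ lies on the positive vertical axis; equality of the Dirichlet domains forces, for each side-pairing $\gamma$, $\gamma^{-1}(z_1)=\sigma_\gamma(z_1)$, whence $A_\gamma(z_1)=z_1$. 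Combined with $A_\gamma(j)=j$, this makes $A_\gamma$ fix the vertical axis, and in particular $A_\gamma(\infty)=\infty$; Lemma~\ref{Agamma}(3) (whose closed form for $A_\gamma(\infty)$ would otherwise give a finite complex number) then forces $d(\gamma)=\overline{a(\gamma)}$, giving $\Iso_\gamma=\Sigma_\gamma$.

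For the torsion-free assertion, I assume $\Gamma$ satisfies the equivalent conditions and is both torsion free and cocompact, aiming at a contradiction. If a side-pairing $\gamma$ lies in $\Gamma_\infty$, then $c(\gamma)=0$ together with $d(\gamma)=\overline{a(\gamma)}$ and $a(\gamma)d(\gamma)=1$ forces $|a(\gamma)|=1$, so $\tr(\gamma)=2\,\mathrm{Re}(a(\gamma))\in[-2,2]$ and $\gamma$ is elliptic or parabolic. Since $|a|=1$ is preserved under products of upper triangular matrices in $\SL(2,\C)$, every element of $\Gamma_\infty$ is elliptic, parabolic, or the identity. Cocompactness excludes parabolics and torsion-freeness excludes nontrivial elliptics, so $\Gamma_\infty=\{1\}$. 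In a discrete group only finitely many isometric spheres have radius above any given threshold, so some halfspace $\{z+tj:t>T\}$ lies entirely in the Ford domain; since $\Gamma_\infty=\{1\}$ and every $\gamma\in\Gamma\setminus\Gamma_\infty$ satisfies $\gamma(\infty)\in\C$, this halfspace embeds into $\HQ^3/\Gamma$, contradicting cocompactness. The main obstacle is extending the $|a|=1$ conclusion from side-pairings to all of $\Gamma_\infty$, and verifying rigorously that a trivial stabilizer of $\infty$ produces a genuinely non-compact end of the quotient.
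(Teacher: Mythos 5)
Your proof of the equivalence follows the paper's route (the decomposition $\gamma=A_\gamma\sigma_\gamma$, Lemma~\ref{Agamma}, normalization of the two centers onto the vertical geodesic), and the direction (2)$\Rightarrow$(1) is complete and essentially identical to the paper's. But in (1)$\Rightarrow$(2) there is a genuine gap at the sentence ``equality of the Dirichlet domains forces, for each side-pairing $\gamma$, $\gamma^{-1}(z_1)=\sigma_\gamma(z_1)$.'' Equality of the two domains only tells you that the face $\Sigma_\gamma$ is \emph{some} bisector with respect to the second center, i.e.\ that there exists $\gamma_1\in\Gamma$, a priori different from $\gamma$, with $\Sigma_\gamma=\Sigma_{\gamma_1}(z_1)$ and hence $\gamma_1^{-1}(z_1)=\sigma_\gamma(z_1)$. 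Identifying $\gamma_1$ with $\gamma$ is exactly the point that needs an argument. The paper closes it as follows: from $\gamma\gamma_1^{-1}(z_1)=\gamma\sigma_\gamma(z_1)=A_\gamma(z_1)$ and the fact that $A_\gamma$ preserves the norm (in the ball model), the first center is equidistant from $z_1$ and $\gamma\gamma_1^{-1}(z_1)$, hence lies on the bisector $\Sigma_{\gamma_1\gamma^{-1}}(z_1)$ --- impossible for an interior point of the domain unless $\gamma_1\gamma^{-1}$ stabilizes $z_1$, which yields $A_\gamma(z_1)=z_1$ and then your argument resumes. Without this step the implication is unproved.

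For the torsion-free assertion you take a genuinely different route, through the structure of $\Gamma_\infty$, and as you yourself note it is incomplete: the conclusion $|a|=1$ is established only for side-pairing transformations in $\Gamma_\infty$, and promoting it to all of $\Gamma_\infty$ would require knowing that $\Gamma_\infty$ is generated by such elements, which you do not prove; the subsequent horoball-embedding argument also leaves the non-compactness of the end to be verified. The paper's argument is shorter and avoids $\Gamma_\infty$ altogether: since every $A_\gamma$ ($\gamma$ a side-pairing) fixes the axis through the two centers pointwise, any point $z_\gamma$ where that axis meets a face $\Sigma_\gamma$ would satisfy $\gamma(z_\gamma)=A_\gamma\sigma_\gamma(z_\gamma)=A_\gamma(z_\gamma)=z_\gamma$, so $\gamma$ would have finite order, contradicting torsion-freeness; hence the axis meets no face, runs to the sphere at infinity inside the closure of the fundamental domain, and the domain cannot be compact. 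You should either adopt this argument or supply the two missing steps in yours.
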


\begin{proof} Suppose first that $\Gamma$ has a Double Dirichlet domain and let $\Phi$ be the set of side-pairing transformations of $\ \Gamma$. We will work first in the ball model but, for simplicity, keep the notion of the upper half plane model. We may suppose that $z_1=0$ and $\Phi$ is taken with respect to $D_{\Gamma}$. Our hypothesis is that $D_{\Gamma}=D_{\Gamma}(z_0)$. Hence, given $\gamma \in \Phi$ there exists $\gamma_1\in \Gamma$ such that $\Sigma_{\gamma}=\Sigma_{\gamma_1}(z_0)$. In particular $z_0$ and $\gamma_1^{-1}(z_0)$ are inverse points with respect to $\Sigma_{\gamma}$ and thus $\gamma_1^{-1}(z_0)=\sigma_{\gamma}(z_0)$. From this we obtain that $\gamma(\gamma_1^{-1}(z_0))=\gamma\sigma(z_0)=A_{\gamma}(z_0)$. Consequently,  $\|\gamma(\gamma_1^{-1}(z_0))\|=\|A_{\gamma}(z_0)\|=\|z_0\|$ and hence, by \cite [Theorem IV.5.1]{ratcliffe}, we have that $0\in \Sigma_{\gamma_1\gamma^{-1}}(z_0)$. But $0$ belongs to the interior of $D_{\Gamma}(z_0)=D_{\Gamma}$; hence we have a contradiction unless $\gamma_1\gamma\in \Gamma_{z_0}$, i.e., $\Sigma_{\gamma_1\gamma}(z_0)$ does not exists. So we proved that $A_{\gamma}(z_0)=z_0$  for every $\gamma \in \Phi$. If $\lambda >0$, with $ \lambda z_0\in D_{\Gamma}$ and $\gamma\in \Phi$, we have that $\gamma^{-1} (\lambda z_0)=\sigma_{\gamma}(A_{\gamma}^{-1}(\lambda z_0))=\sigma_{\gamma}(\lambda z_0)$, i.e., $\lambda z_0$ and $\gamma^{-1} (\lambda z_0)$ are inverse points with respect to $\Sigma_{\gamma}$. Hence we proved that $\Sigma_{\gamma}=\Sigma_{\gamma}(\lambda z_0)$.   To complete this part of the proof, we now work in $\HQ^3$ and suppose that $z_0=j$. So we have that $A_{\gamma}(\lambda j)=\lambda j$, for all $\lambda >0$. By  continuity,  it follows that $A_{\gamma}(\infty)=\infty$. By Lemma~\ref{Agamma}, we have that $\Sigma_{\gamma}=\Iso_{\gamma}$. 

Now suppose that $\Gamma $ is torsion free. Let $V_{\gamma}=\BQ^3\cap\Sigma_{\gamma}\cap \R z_0$ and suppose that $V_{\gamma}=\{z_{\gamma}\}$. Then clearly $A_{\gamma}(z_{\gamma})=z_{\gamma}$ and hence $\gamma(z_{\gamma})=z_{\gamma}$. It follows that $o(\gamma)<\infty$ and $\Sigma_{\gamma}$ does not exist, a contradiction. So we have that $V_{\gamma}=\emptyset$ for all $\gamma\in \Phi$. From this it follows that $S^2\cap \R z_0$ is not covered by any $\Sigma_{\gamma}$ and hence $\Gamma$ is not cocompact.

We now prove the converse. Suppose that $\Gamma$ has a Dirichlet-Ford domain and let $\Phi$ be the set of side-pairing transformations of $\Gamma$. For every $\gamma \in \Phi$ we have that $d(\gamma)=\overline{a(\gamma )}$ (\cite{kijusiso} or Proposition~\ref{isogammaupmodel}) and hence, by Lemma~\ref{Agamma}, we have that $A_{\gamma}(0)=0$. In particular, $A_{\gamma}$ is an Euclidean linear isometry and hence $A_{\gamma}(\lambda j)=\lambda j$, for all $\lambda >0$. From this we have that for every $\lambda >0$, with $\lambda j$ an interior point of the domain,  $\gamma^{-1}(\lambda j)=\sigma_{\gamma}(\lambda j)$, i.e.,  $\Sigma_{\gamma}(\lambda j)=\Sigma_{\gamma}( j)=\Sigma_{\gamma}$. \end{proof}

Together with the results of Section IV of \cite{kijusiso}, this theorem gives a complete characterization of DF and DC domains and gives an algebraic criterium to decide whether a domain is a DF domain (and hence a DC domain).

%***************************************************************************

\section{Bisector of Kleinian Groups}
\label{ford}

In this section we will frequently switch between the ball and upper half space models of hyperbolic 2 and 3-space and $\Gamma$ will stand for a discrete group of orientation preserving isometries. We keep notation as simples as possible to avoid  confusion.  Note that the results presented here for Kleinian groups are similar to those   in \cite{beardon} for Fuchsian groups.   A theory of pencils can also be developed in this case.

We first consider hyperbolic 3-space. We work in the ball model but, for simplicity, use the notation of the upper half plane model. Let $\gamma \in \Gamma$ and write $\gamma=A_{\gamma}\sigma_{\gamma}$. It follows that $\det (A_{\gamma})=-1$ and hence there exists $p_{\gamma}\in \BQ^3$, such that $A_{\gamma}(p_{\gamma})=-p_{\gamma}$. We have that $A_{\gamma}(P_{\gamma})=\gamma (\infty)=P_{\gamma^{-1}}$. Since $A_{\gamma}$ is a linear orthogonal map we obtain that $<p_{\gamma}|P_{\gamma}>=<-p_{\gamma}|P_{\gamma^{-1}}>$ and hence $p_{\gamma}$ is orthogonal to $P_{\gamma}+P_{\gamma^{-1}}$. In the same way we obtain that if $A_{\gamma}(w)=w$ then $w$ is orthogonal to $P_{\gamma}-P_{\gamma^{-1}}$. If $A_{\gamma}$ is diagonalizable then either $A_{\gamma}=-Id$ or it is the reflection in the plane $W: =<P|p_{\gamma}>=0$. In the first case it follows that $P_{\gamma^{-1}}=-P_{\gamma}$  and hence $\Sigma_{\gamma}$ and $\Sigma_{\gamma^{-1}}$ are disjoint. Consequently $\gamma$ is hyperbolic or loxodromic (see \cite{kijusiso}). In the second case it follows that $P_{\gamma}+P_{\gamma^{-1}}\in W$, $P_{\gamma}-P_{\gamma^{-1}}$ is orthogonal to $W$ and $\gamma$ is elliptic or parabolic if and only if $W\cap \Sigma_{\Psi (\gamma)}\neq \emptyset$. 

Given $\gamma\in \PSL (2,\C)$, choose $\gamma_0\in \SU (2,\C)$ such that $\gamma_1=\gamma_0^{-1}\gamma\gamma_0$ fixes $\infty$ when acting on $\HQ^3$. Then $c(\gamma_1)=0$, $\Sigma_{\gamma_1}=\Sigma_{\gamma\gamma_0}=\gamma_0^{-1}(\Sigma_{\gamma})$ and $(\Sigma_{\gamma},\Sigma_{\gamma^{-1}})=(\Sigma_{\gamma\gamma_0},\Sigma_{\gamma^{-1}\gamma_0})$$=\frac{|1-<P_{\gamma\gamma_0}|P_{\gamma^{-1}\gamma_0}>|}{R^2_{\gamma\gamma_0}}$ \\ $=\frac{1}{2(\|\gamma\|^2-2)}[-2|b|^2(Re(a\overline{d})-1)-(|a|^2+|d|^2)(\|\gamma\|-2)]$. 

In case $\gamma$ is hyperbolic then $a\overline{d}\in \R$ and so $(\Sigma_{\gamma},\Sigma_{\gamma^{-1}})=\frac{a^2+d^2}{2}=\frac{1}{2}\tr(\gamma^2)=-1+\frac{1}{2}[\tr(\gamma)]^2$.  In case $\gamma$ is elliptic or parabolic then $d=\overline{a}$ and $|a|=|d|=1$. Using this we get once more that $(\Sigma_{\gamma},\Sigma_{\gamma^{-1}})=-1+\frac{1}{2}[\tr(\gamma)]^2$. 

\begin{lemma}
\label{sigma2}

Let $\gamma\in \PSL (2,\C)$, be non unitary and non-loxodromic. Then we have that

\begin{enumerate}

\item $(\Sigma_{\gamma},\Sigma_{\gamma^{-1}})=-1+\frac{1}{2}[\tr(\gamma)]^2$

\item $\sigma_{\gamma^{-1}}\circ\sigma_{\gamma}=\gamma^2$

\item   $A_{\gamma}^2=Id$. In  particular, $A_{\gamma}$ is diagonalizable and it is either $-Id$ or a reflection.
\end{enumerate}

\end{lemma}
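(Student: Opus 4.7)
My plan is as follows. Part~(1) is essentially already established in the two paragraphs immediately preceding the lemma, where the identity $(\Sigma_\gamma,\Sigma_{\gamma^{-1}}) = -1+\tfrac{1}{2}[\tr(\gamma)]^2$ was derived separately for $\gamma$ hyperbolic and for $\gamma$ elliptic or parabolic; since ``non-loxodromic and non-unitary'' covers exactly these three types, (1) follows. For parts~(2) and~(3), I would first observe that they are equivalent: using $\sigma_{\gamma^{-1}} = A_\gamma \sigma_\gamma A_\gamma^{-1}$ (noted after Lemma~\ref{Agamma}), one computes
\[
\gamma^2 = (A_\gamma \sigma_\gamma)^2 = (A_\gamma \sigma_\gamma A_\gamma^{-1})\cdot A_\gamma^2\,\sigma_\gamma = \sigma_{\gamma^{-1}}\, A_\gamma^2\,\sigma_\gamma,
\]
so $A_\gamma^2 = Id$ if and only if $\gamma^2 = \sigma_{\gamma^{-1}}\sigma_\gamma$. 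Hence it suffices to prove~(3).

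Working in the ball model, $A_\gamma$ is a linear orthogonal map fixing the origin with $\det A_\gamma = -1$, so it is either a plane reflection, the map $-Id$, or a rotoreflection with rotation angle in $(0,\pi)$; only the last fails to be an involution, and we rule it out case by case. For elliptic $\gamma$, the rotation axis $\ell$ lies in $\Sigma_\gamma \cap \Sigma_{\gamma^{-1}}$ because the bisector condition $\rho(0,x)=\rho(0,\gamma(x))$ is automatic when $\gamma(x)=x$, so $A_\gamma = \gamma\sigma_\gamma$ fixes $\ell$ pointwise; since $\|P_\gamma\| = \|P_{\gamma^{-1}}\|$ by Proposition~\ref{isogammaballmodel}(5), the radical plane of $\Sigma_\gamma$ and $\Sigma_{\gamma^{-1}}$ passes through the origin, so the linear span $\Pi$ of $\ell$ is a $2$-plane through $0$, and $A_\gamma$ fixes $\Pi$ pointwise; combined with $\det = -1$, this makes $A_\gamma$ the reflection in $\Pi$. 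For parabolic $\gamma$, the unique fixed point $p$ lies in $\Sigma_\gamma \cap \Sigma_{\gamma^{-1}}$ by~(1) (since $\cos\theta_0 = 1$ forces tangency at $p$), so $A_\gamma$ fixes the line $\R p$ pointwise; by the same determinant argument, $A_\gamma$ is a reflection in a plane containing $\R p$. For hyperbolic $\gamma$, conjugation by some $\gamma_0 \in \SU(2,\C)$ puts $\gamma$ in the form $\gamma_1$ with $c(\gamma_1)=0$, $a(\gamma_1)=\alpha \in \R$, $|\alpha| > 1$; such conjugation preserves the property $A^2 = Id$ because $\Psi(\gamma_0)$ is an orthogonal map on $\BQ^3$. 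A direct computation using Proposition~\ref{isogammaupmodel} then gives $A_{\gamma_1}(w) = P_{\gamma_1} + \alpha^2 R_{\gamma_1}^2 (w-P_{\gamma_1})/|w-P_{\gamma_1}|^2$, an inversion in a Euclidean sphere, which is manifestly involutive.

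The main obstacle is the elliptic case: one must verify carefully that the affine $2$-plane containing the rotation axis $\ell$ actually passes through the origin, so that the linear span of $\ell$ is the full plane and $A_\gamma$ is forced by its fixed-point set to be a genuine reflection rather than a higher-order rotoreflection. This step hinges on the symmetry $\|P_\gamma\| = \|P_{\gamma^{-1}}\|$ but is otherwise easy to overlook.
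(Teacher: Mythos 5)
Your proof is correct, but it runs the argument in the opposite direction from the paper's. The paper proves item (2) first: it normalizes $\gamma$ so that it fixes $\infty$ (and is diagonal in the hyperbolic case) and computes $\sigma_{\gamma^{-1}}\circ\sigma_{\gamma}$ explicitly from the defining formulas of the two bisectors in each of the three cases (translation by $2v$ in the parabolic case, rotation by $2\theta$ about $\Sigma_{\gamma}\cap\Sigma_{\gamma^{-1}}$ in the elliptic case, an explicit composition of inversions in the hyperbolic case); item (3) is then extracted from $\gamma^2=A_{\gamma}\sigma_{\gamma}A_{\gamma}\sigma_{\gamma}$ together with $\sigma_{\gamma^{-1}}=A_{\gamma}\sigma_{\gamma}A_{\gamma}^{-1}$ --- exactly the equivalence you record, read in the other direction. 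You instead prove (3) directly, by classifying orientation-reversing orthogonal maps of $\R^3$ and exhibiting a nonzero fixed vector of $A_{\gamma}$ (the elliptic axis, respectively the parabolic boundary fixed point), while in the hyperbolic case your computed formula $A_{\gamma_1}(w)=P_{\gamma_1}+\alpha^2R_{\gamma_1}^2(w-P_{\gamma_1})/|w-P_{\gamma_1}|^2$ is correct and does exhibit an inversion, hence an involution. Your route buys two things: the hyperbolic case needs only the unitary normalization $c(\gamma_1)=0$, which genuinely preserves the bisectors based at the origin and the conjugacy class of $A_{\gamma}$, rather than the full diagonalization the paper invokes; and the fixed-vector analysis dovetails with the discussion of $p_{\gamma}$ and the eigenvectors of $A_{\gamma}$ that precedes the lemma. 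The paper's route, in exchange, identifies $\gamma^2$ geometrically as the product of the two reflections, which is more informative than the bare identity. One small refinement to your elliptic case: the radical-plane step presupposes $\Sigma_{\gamma}\neq\Sigma_{\gamma^{-1}}$; when these coincide (e.g.\ when $o(\gamma)=2$) you can argue instead that a linear map fixing a nondegenerate circular arc pointwise must be the identity unless the affine plane of that arc passes through the origin, which yields the same conclusion without mentioning the radical plane.
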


\begin{proof} The first item was proved above. To prove the second item we work in $\HQ^3$ and suppose that $\gamma$ fixes $\infty$ and $b(\gamma)=0$ if $\gamma$ is hyperbolic. We will  use the explicit formulas of $\ \Sigma_{\gamma}$ and $\sigma_{\gamma}$ freely and notation will follow that of the results of  Section~\ref{seciso}. In the parabolic case we have that $v=b$,  $\sigma_{\gamma}(P)= P-[<P|v>+\frac{|v|^2}{2}]\cdot \frac{v}{|v|^2}$ and $\sigma_{\gamma^{-1}}(P)= P-[<P|v>-\frac{|v|^2}{2}]\cdot \frac{v}{|v|^2}$. From this it follows that $\sigma_{\gamma^{-1}}\circ\sigma_{\gamma}(P)=P+2v=\gamma^2(P)$. In the elliptic case, set $a=a(\gamma)=e^{i\theta}$, $b=b(\gamma)\neq 0$. Then $\Sigma_{\gamma}$ and  $\Sigma_{\gamma^{-1}}$ are vertical planes with normal vectors $v_{\gamma}=e^{-i\theta}b$ and $v_{\gamma^{-1}}=-e^{i\theta}b$, respectively. Hence the angle between these two planes is $2\theta$, the angle of rotation of $\gamma$ around its axis, the intersection of the two planes. From this we get that $\sigma_{\gamma^{-1}}\circ\sigma_{\gamma}=\gamma^2$. In the hyperbolic case we have that $\gamma (P)=a(\gamma)^2P$, $\sigma_{\gamma}(P)=\frac{1}{a(\gamma)^2\overline{P}}$ and    $\sigma_{\gamma^{-1}}(P)=\frac{a(\gamma)^2}{\overline{P}}$. From this once again we get the desired formula.

To prove the last item recall that $\sigma_{\gamma^{-1}}=A_{\gamma}\sigma_{\gamma}A_{\gamma}$. The second item gives  that $\sigma_{\gamma^{-1}}\circ\sigma_{\gamma}=A_{\gamma}\sigma_{\gamma}A_{\gamma}\sigma_{\gamma}$ and hence $\sigma_{\gamma^{-1}}=A_{\gamma}\sigma_{\gamma}A_{\gamma}$. It follows that $A_{\gamma}=A_{\gamma}^{-1}$, finishing the proof. \end{proof}

\begin{lemma}
\label{composreflex}

Let $\Sigma_1$ and $\Sigma_2$ be distinct totally geodesic hyper surfaces of $\ \BQ^3$, $\sigma_i,i=1,2$ the reflection in $\Sigma_i$ and $\gamma=\sigma_1\circ\sigma_2$. Then $\gamma$ is non-loxodromic and  $(\Sigma_1,\Sigma_2)=\frac{1}{2}|\tr (\gamma)|$.
\end{lemma}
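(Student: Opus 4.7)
The plan is to split into three geometric cases according to whether $\Sigma_1\cap\Sigma_2\cap\BQ^3$ is non-empty, whether the two surfaces meet only on $\partial\BQ^3$, or whether they are disjoint in $\overline{\BQ^3}$; in each case $\gamma=\sigma_1\circ\sigma_2$ has a prescribed (non-loxodromic) type and can be conjugated into a standard normal form in $\HQ^3$ where both sides of the identity are immediate to compute.

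First I would establish that $\gamma$ is never loxodromic. If $\Sigma_1\cap\Sigma_2\cap\BQ^3\neq\emptyset$, the intersection is a geodesic line $L$ fixed pointwise by each $\sigma_i$, hence by $\gamma$, so $\gamma$ is elliptic with axis $L$. If the two surfaces meet only on $\partial\BQ^3$, they are tangent at a unique boundary point $p$, which is then the unique fixed point of $\gamma$, so $\gamma$ is parabolic. Otherwise the surfaces are ultraparallel and share a unique common perpendicular geodesic along which $\gamma$ acts as a pure translation, so $\gamma$ is hyperbolic. None of these three types carries a simultaneous rotational and translational part along a common axis, so $\gamma$ is not loxodromic.

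Next I would verify the trace identity case by case. In the elliptic case, sending $L$ to the $j$-axis makes $\Sigma_1,\Sigma_2$ into vertical half-planes through the $j$-axis meeting at dihedral angle $\alpha$, whence $\gamma$ is a rotation by $2\alpha$ with representative $\mathrm{diag}(e^{i\alpha},e^{-i\alpha})\in\SL(2,\C)$, and $\tfrac12|\tr(\gamma)|=|\cos\alpha|=(\Sigma_1,\Sigma_2)$. In the parabolic case, sending $p$ to $\infty$ makes the $\Sigma_i$ into parallel vertical planes and $\gamma$ into a horizontal translation with $\tr(\gamma)=\pm 2$, while $(\Sigma_1,\Sigma_2)=\cos 0=1$, so the identity reads $1=1$. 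In the hyperbolic case, sending the common perpendicular to the $j$-axis makes the $\Sigma_i$ into hemispheres centred at $0$ of radii $r_1,r_2$; reflection in each is an Euclidean inversion and a direct computation yields $\gamma(P)=(r_1^2/r_2^2)P$, so $\gamma=\mathrm{diag}(r_1/r_2,r_2/r_1)$ and $\tfrac12|\tr(\gamma)|=(r_1^2+r_2^2)/(2r_1r_2)$, which coincides with the inversive distance between the two spheres.

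The only genuine obstacle is notational: in the ultraparallel sub-case $(\Sigma_1,\Sigma_2)$ must be read as inversive distance rather than cosine of a dihedral angle, since the two planes do not actually intersect. This extended reading is already in force inside the hyperbolic portion of Lemma~\ref{sigma2}, where the value $-1+\tfrac12[\tr(\gamma)]^2$ exceeds $1$, so keeping the convention consistent throughout makes the three sub-computations line up with the single formula in the statement.
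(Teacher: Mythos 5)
Your proof is correct and follows essentially the same route as the paper's: the same three-case split (intersecting in a geodesic, tangent at the boundary, ultraparallel), the same normalization in $\HQ^3$, and the same reduction of the disjoint case to concentric hemispheres with $\gamma(X)=\frac{r_1^2}{r_2^2}X$. Your explicit remark that $(\Sigma_1,\Sigma_2)$ must be read as Beardon's inversive product in the ultraparallel case is a useful clarification that the paper leaves implicit.
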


\begin{proof} We distinguish three cases: The surfaces are parallel, their intersection is a geodesic or they are disjoined. Working in $\HQ^3$, we may suppose that $\Sigma_2$ is the plane $x=x_0$ with $x_0>0$.

In the first case $\Sigma_1$ is a plane with equation $x=x_1$ with $x_1\neq x_0$. In this case $\sigma_1\circ\sigma_2$ is a parabolic element and the formula is easily seen to hold. In fact, the proof is along the same lines as that in the proof of the previous lemma.

In the second case $\Sigma_1$ is  a vertical plane making an angle of $\theta$ degrees with $\Sigma_2$. In this case $\sigma_1\circ\sigma_2$ corresponds to the rotation of  $2\theta$ degrees around $\Sigma_1\cap \Sigma_2$ and once again the formula holds trivially. Once again we refer to the proof of the previous lemma.

In the third case we may take $\Sigma_1=S_r(0)$ and $r<x_0$. The action on $\partial \HQ^3$ is given by $\sigma_1(x)=\frac{r^2}{x}$ and $\sigma_2(x)=-x+2x_0$. Since $0<r<x_0$ we find that $\gamma$ has exactly two fixed points in $\partial \HQ^3$, $x_2$ and $x_3$ say, and that the line segment $[x_2,x_3]$ is invariant under $\gamma$. From this it follows that the geodesic line  $l$, in $\HQ^3$,  linking $x_2$ and $x_3$ is the axis of $\gamma$. Mapping $l$ to a vertical line we may suppose that $c(\gamma)=b(\gamma)=0$. Hence we have that $a(\gamma)\cdot d(\gamma)=1$, $\gamma (\infty)=\infty$ and $\gamma(0)=0$. Note that now the $\Sigma_i$´s are Euclidean spheres $S_{R_i}(P_i)$. We have that $\infty =\gamma (\infty)=\sigma_1(\sigma_2(\infty))$ and hence $P_2=\sigma_2(\infty)=P_1=P$. It follows that $0=\gamma(0)=\sigma_1(\sigma_2(0))=[1+\frac{R_1^4}{R_2^4}]P$ and hence $P=0$. Consequently $\gamma(X)=\frac{R_1^2}{R_2^2}X$ and thus $a(\gamma)^2=\gamma (1)=\frac{R_1^2}{R_2^2}$. From this, and the definition of $(\Sigma_1,\Sigma_2)$,  the formula follows readily. \end{proof}

 \begin{lemma}
 \label{argum} Let $\gamma$ and $\gamma_1$ be such that $\Sigma_{\gamma}=\Iso_{\gamma}$ and $\Sigma_{\gamma_1}=\Iso_{\gamma_1}$. Then $\{\overline{c(\gamma)},i\overline{c(\gamma)},j\}$ is a basis of eigenvectors  of $A_{\gamma}$, $A_{\gamma}$ is the reflection in the plane $W_{\gamma}=span_{\R}[i\overline{c(\gamma)},j]$ and $\overline{c(\gamma)}$ is orthogonal to $W_{\gamma}$. Moreover, the angle between $W_{\gamma}$ and $W_{\gamma_1}$ is given by $\arg (\frac{c_1}{c_2})$.
 
 \end{lemma}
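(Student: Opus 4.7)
The plan is to show that under the hypothesis $\Sigma_{\gamma}=\Iso_{\gamma}$ the isometry $A_{\gamma}$ collapses to a very rigid form: a Euclidean reflection in a vertical plane through the $j$-axis whose trace on $\mathbb{C}$ can be read off from $c(\gamma)$. The cleanest route is to derive a closed-form expression for $A_{\gamma}|_{\mathbb{C}}$ and then read off the eigenbasis and the angle by direct substitution.

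\textbf{Setup.} By Proposition~\ref{isogammaupmodel}, $\Sigma_{\gamma}=\Iso_{\gamma}$ forces $d=\overline{a}$ (and $c=\lambda\overline{b}$, $\lambda\in\R$); in particular $c\neq 0$ and $\gamma\notin\Gamma_j$ because an isometric sphere exists. Items (1) and (5) of Lemma~\ref{Agamma} then give $A_{\gamma}(j)=j$, $A_{\gamma}(0)=0$, $A_{\gamma}(\infty)=\infty$. Being the composition of an orientation-preserving Möbius transformation with a sphere reflection, $A_{\gamma}$ is orientation-reversing, and fixing $\infty$ makes it Euclidean; fixing $0$ and $j$ further forces it to preserve the $j$-axis pointwise. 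Consequently $A_{\gamma}$ restricts to a linear orthogonal involution of $\mathbb{C}$ of determinant $-1$, i.e.\ a reflection in a line through the origin.

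\textbf{Closed form.} To pin down that line I would compute $A_{\gamma}(z)=\gamma(\sigma_{\gamma}(z))$ for $z\in\mathbb{C}$. Using the decomposition $\gamma(w)=a/c-1/(c(cw+d))$ and the standard reflection formula $\sigma_{\gamma}(z)=P_{\gamma}+R_{\gamma}^{2}/\overline{z-P_{\gamma}}$ with $P_{\gamma}=-d/c$ and $R_{\gamma}^{2}=1/|c|^{2}$, the identity $cP_{\gamma}+d=0$ gives $c\sigma_{\gamma}(z)+d = cR_{\gamma}^{2}/\overline{z-P_{\gamma}}$. Substituting and using $c^{2}R_{\gamma}^{2}=c/\overline{c}$ together with $\overline{P_{\gamma}}=-\overline{d}/\overline{c}$ yields $A_{\gamma}(z)=(a-\overline{d})/c-(\overline{c}/c)\,\overline{z}$. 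The hypothesis $d=\overline{a}$ kills the additive constant, leaving the compact formula $A_{\gamma}(z)=-(\overline{c}/c)\,\overline{z}$.

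\textbf{Eigenbasis.} From this closed form one reads off $A_{\gamma}(\overline{c})=-(\overline{c}/c)c=-\overline{c}$ and $A_{\gamma}(i\overline{c})=-(\overline{c}/c)(-ic)=i\overline{c}$; combined with $A_{\gamma}(j)=j$, this produces the orthogonal eigenbasis $\{\overline{c},i\overline{c},j\}$ with eigenvalues $-1,+1,+1$. Thus $A_{\gamma}$ is the reflection in the plane $W_{\gamma}=span_{\R}[i\overline{c},j]$, and $\overline{c}$ is its normal. For the final claim, note $W_{\gamma}$ and $W_{\gamma_{1}}$ both contain the $j$-axis, so their dihedral angle equals the planar angle in $\mathbb{C}$ between $\R\cdot i\overline{c}$ and $\R\cdot i\overline{c_{1}}$, which is $\arg\bigl((i\overline{c_{1}})/(i\overline{c})\bigr)=\arg(\overline{c_{1}/c})$, i.e.\ $\arg(c_{1}/c)$ up to sign (lines are identified modulo $\pi$).

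The main obstacle is the algebraic manipulation in the second step, but it amounts to three substitutions ($cP_{\gamma}+d=0$, $c^{2}R_{\gamma}^{2}=c/\overline{c}$, $\overline{P_{\gamma}}=-\overline{d}/\overline{c}$) followed by the cancellation enabled by $d=\overline{a}$; once the closed form $A_{\gamma}(z)=-(\overline{c}/c)\overline{z}$ is established, the eigenbasis, the reflection plane, and the angle formula are immediate by direct substitution, with no need to treat $a=0$ as a separate case.
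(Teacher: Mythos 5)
Your proposal is correct, but it reaches the eigenbasis by a different route than the paper. The paper's proof never writes down $A_{\gamma}$ explicitly: it uses $A_{\gamma}(P_{\gamma})=\gamma(\infty)=\hat{P}_{\gamma^{-1}}$, the relation $P_{\gamma^{-1}}=-\frac{a^2}{|a|^2}P_{\gamma}$, the conjugate-linearity of an orientation-reversing orthogonal map of $\C$, and the involutivity $A_{\gamma}^2=Id$ (obtained from $\tr(\gamma)\in\R$ via Lemma~\ref{sigma2}) to conclude that $aP_{\gamma}$ is a $(-1)$-eigenvector, and then identifies $aP_{\gamma}=-|a|^2/c$ as a real multiple of $\overline{c}$. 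You instead derive the closed form $A_{\gamma}(z)=\frac{a-\overline{d}}{c}-\frac{\overline{c}}{c}\overline{z}$ on $\partial\HQ^3$ directly from $\gamma\circ\sigma_{\gamma}$, after which $d=\overline{a}$ kills the constant and the eigenvectors $\overline{c}$, $i\overline{c}$ drop out by substitution; the angle claim then follows exactly as in the paper, since both planes contain the $j$-axis. Your computation checks out ($cP_{\gamma}+d=0$, $c^2R_{\gamma}^2=c/\overline{c}$, $\overline{P_{\gamma}}=-\overline{d}/\overline{c}$ are all correct with $P_{\gamma}=\hat{P}_{\gamma}=-d/c$, $R_{\gamma}^2=1/|c|^2$). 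What your version buys is robustness and extra information: the formula $A_{\gamma}(z)=-\frac{\overline{c}}{c}\overline{z}$ covers the case $a=d=0$, where the paper's eigenvector $aP_{\gamma}$ degenerates to $0$ and its argument formally breaks down, and it quantifies how far $A_{\gamma}$ is from being linear when $\Sigma_{\gamma}\neq\Iso_{\gamma}$ (the translation part $\frac{a-\overline{d}}{c}$). What the paper's version buys is that it isolates the geometric mechanism ($A_{\gamma}$ swaps the centers $P_{\gamma}$, $P_{\gamma^{-1}}$ and is an involution), which is reused elsewhere in Section~\ref{ford}. One small point to make explicit in your write-up: the restriction of $A_{\gamma}$ to $\C$ determines the reflection plane in $\BQ^3$ only because you already know $A_{\gamma}(j)=j$ and $A_{\gamma}$ is orthogonal, so the third eigenvector is $j$; you do say this, but it is the step that ties the planar formula back to $W_{\gamma}=span_{\R}[i\overline{c},j]$.
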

 
 \begin{proof}
Suppose that $\Sigma_{\gamma}=\Iso_{\gamma}$. Then we have that $P_{\gamma}=\hat{P}_{\gamma}$ and $A_{\gamma}(0)=0$. It follows that $A_{\gamma}$ is a linear isometry. We also have that $A_{\gamma}(P_{\gamma})=\gamma(\infty)=\hat{P}_{\gamma^{-1}}$. Since $A_{\gamma}$ is a linear orthogonal map reversing orientation we have that $A_{\gamma}(iP_{\gamma})=-i\hat{P}_{\gamma^{-1}}$. $A_{\gamma}$ fixes $\R^+j$ point wise. We have that $P_{\gamma^{-1}}=-\frac{\overline{a}^2}{|a|^2}P_{\gamma}$. Since $\tr (\gamma)\in \R$ we have that $\gamma$ is non-loxodromic and hence $A_{\gamma}^2=Id$. We have that $A_{\gamma}(aP_{\gamma})=\overline{a}P_{\gamma^{-1}}=-aP_{\gamma}$ and hence $A_{\gamma}(iaP_{\gamma})=iaP_{\gamma}$. Since $\overline{c}$ is an $\R$-multiple of $aP_{\gamma}$, it follows that $A_{\gamma}(\overline{c})=-\overline{c}$ an hence $\{\overline{c},i\overline{c},j\}$ is a basis of eigenvectors  of $A_{\gamma}$. So $A_{\gamma}$ is the reflection in the plane $W_{\gamma}=span_{\R}[i\overline{c},j]$ and $\overline{c}$ is orthogonal to $W_{\gamma}$. Hence   the angle between $W_{\gamma}$ and $W_{\gamma_1}$ is given by $\arg (\frac{c_1}{c_2})$. \end{proof}

It would be interesting to know if $\arg (\frac{c_1}{c_2})$ is also the dihedral angle between $\Sigma_{\gamma}$ and $\Sigma_{\gamma_1}$. This is true in the examples given in \cite{lakeland}. Note also that $\gamma$ is hyperbolic, elliptic or parabolic if either $\Sigma_{\gamma}\cap W_{\gamma}$ is empty, is a circle or consists of a single point. This follows also from the description of the relative position of $\ \Sigma_{\gamma}$ and  $\Sigma_{\gamma^{-1}}$ given in  \cite{kijusiso}. The lemma also suggests that a DF domain must be quite symmetrical. In the Fuchsian case the symmetry is with respect to the $i$-axes in $\HQ^2$ (see \cite{kijusiso, lakeland}).

In the Fuchsian case we take $\BQ^2$ as our model but, for simplicity of notation, use the notation of $\ \HQ^2$. Let $\gamma =\begin{pmatrix}
a & b\\
\overline{b} & \overline{a}
\end{pmatrix}$ with $|a|^2-|b|^2=1$ and $b\neq 0$. Writing $\gamma=A_{\gamma}\sigma_{\gamma}$ we have that $A_{\gamma}(P_{\gamma})=P_{\gamma^{-1}}$.  Here we have that $P_{\gamma}=\frac{-\overline{a}}{\overline{b}}$. Denote by $\{P_2,P_1\}$ and $\{P_4,P_3\}$, respectively the intersections $\Sigma_{\gamma}\cap S_1(0)$ and $\Sigma_{\gamma^{-1}}\cap S_1(0)$ (reading counterclockwise). The points $P_k,k=1,4$ can be obtained solving the equations $a\overline{b}z^2+2|b|^2z+\overline{a}b=0$ and $\overline{ab}z^2-2|b|^2z+ab=0$. We obtain that $P_1=\lambda P_{\gamma}$,  $P_4=\overline{\lambda}P_{\gamma^{-1}}$, where $\lambda =\frac{|b|}{|a|^2}(|b|+i)$, and, since $A_{\gamma}$ reverses orientation, $A_{\gamma}(P_1)=P_4$. Actually since $A_{\gamma}$ is an orthogonal map reversing  orientation and $A_{\gamma}(P_{\gamma})=P_{\gamma^{-1}}$, we have that $A_{\gamma}(iP_{\gamma})=-iP_{\gamma^{-1}}$. Noting that $P_{\gamma^{-1}}=-\frac{a^2}{|a|^2}P_{\gamma}$, and using the linearity of $A_{\gamma}$, we obtain that $A_{\gamma}^2=Id$. In fact, $P_{\gamma}+P_{\gamma^{-1}}$ or $P_{\gamma}-P_{\gamma^{-1}}$ is an eigenvector of $A_{\gamma}$. If necessary, the other eigenvector is obtained multiplying the one already obtained by $i$. Hence $A_{\gamma}$ is always diagonalizable and is a reflection. We summarize this in the following result.

\begin{lemma}
\label{fuchs}

Let $\gamma =\begin{pmatrix}
a & b\\
\overline{b} & \overline{a}
\end{pmatrix}$, with $|a|^2-|b|^2=1$ and $b\neq 0$, act on $\BQ^2$. Then $A_{\gamma}$ is   a reflection and  $P_{\gamma}+P_{\gamma^{-1}}$ or $P_{\gamma}-P_{\gamma^{-1}}$ is an eigenvector of $A_{\gamma}$.

\end{lemma}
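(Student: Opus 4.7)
The plan is to mirror the three-dimensional argument of Lemma~\ref{sigma2} and Lemma~\ref{argum} in this simpler two-dimensional setting, exploiting the fact that in $\R^2$ every orientation-reversing orthogonal linear map is automatically a reflection. This makes the ``reflection'' half of the conclusion essentially free.

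First I would write $\gamma = A_\gamma \sigma_\gamma$ and settle the reflection claim by a dimension count. Since $\gamma$ acting on $\BQ^2$ preserves orientation and $\sigma_\gamma$ (reflection in the isometric circle) reverses it, $A_\gamma$ is an orientation-reversing element of $O(2)$, hence the reflection in some line through the origin. In particular $A_\gamma$ is diagonalizable with eigenvalues $+1$ and $-1$ and satisfies $A_\gamma^2 = Id$; this gives the first assertion.

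Next I would identify the eigenvectors suggested by the statement. Because $\sigma_\gamma$ is an involution with $\sigma_\gamma(\infty) = P_\gamma$, one computes
$$A_\gamma(P_\gamma) \;=\; \gamma\!\circ\!\sigma_\gamma(P_\gamma) \;=\; \gamma(\infty) \;=\; a/\overline{b} \;=\; P_{\gamma^{-1}},$$
the last equality coming from applying the formula $P_\gamma = -\overline{a}/\overline{b}$ to the matrix entries of $\gamma^{-1}$. Applying $A_\gamma$ once more and using $A_\gamma^2 = Id$ gives $A_\gamma(P_{\gamma^{-1}}) = P_\gamma$, so by linearity
$$A_\gamma(P_\gamma + P_{\gamma^{-1}}) = P_\gamma + P_{\gamma^{-1}}, \qquad A_\gamma(P_\gamma - P_{\gamma^{-1}}) = -(P_\gamma - P_{\gamma^{-1}}).$$
To finish I would observe that since $P_\gamma = -\overline{a}/\overline{b} \neq 0$ and $P_{\gamma^{-1}} = -(a^2/|a|^2)P_\gamma$, the two vectors $P_\gamma \pm P_{\gamma^{-1}}$ cannot both vanish, so at least one of them is a genuine nonzero eigenvector of $A_\gamma$.

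There is essentially no real obstacle, as every computational ingredient appears in the paragraph immediately preceding the lemma; the write-up only needs to collect those observations and highlight the small case distinction required when one of $P_\gamma \pm P_{\gamma^{-1}}$ happens to vanish (which is precisely when $a^2/|a|^2 = \pm 1$, i.e.\ when $a$ is real or purely imaginary).
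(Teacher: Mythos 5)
Your proposal is correct and follows essentially the same route as the paper: the key computation $A_{\gamma}(P_{\gamma})=\gamma(\infty)=P_{\gamma^{-1}}$ and the identification of $P_{\gamma}\pm P_{\gamma^{-1}}$ as the $\pm 1$-eigenvectors (at least one of which is nonzero) are exactly the steps in the paragraph the paper uses as its proof. The only, harmless, shortcut is that you get $A_{\gamma}^{2}=\mathrm{Id}$ for free from the classification of orientation-reversing elements of $O(2)$, whereas the paper deduces it by computing the action of $A_{\gamma}$ on the basis $\{P_{\gamma},iP_{\gamma}\}$ together with the relation $P_{\gamma^{-1}}=-\frac{a^{2}}{|a|^{2}}P_{\gamma}$.
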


If $\ \Sigma_1$ and $\Sigma_2$ are hyperbolic planes orthogonal to one another then the  product of the reflections in $\Sigma_1$ and $\Sigma_2$ is the reflection in their intersection.

\begin{lemma}
\label{lines}
Let $L_1$ and $L_2$ be two hyperbolic lines,  $\sigma_k$ the reflection in $L_k, k=1,2$ and $\gamma=\sigma_1\circ \sigma_2$. Then the following hold.

\begin{enumerate}

\item $\tr(\gamma)\in \R$ if and only if there exist a hyperbolic plane $\Sigma$ containing both $L_1$ and $L_2$.

\item Every hyperbolic or elliptic transformation is a product of two reflections in a line.

\item If $L_1\cup L_2\subset \Sigma$, where $\Sigma$ is a hyperbolic plane, then $\gamma$ is parabolic, elliptic or hyperbolic depending on the two lines being tangent, intersecting or disjoint. If such a $\Sigma$ does not exist then $\gamma$ is loxodromic.

\end{enumerate}

\end{lemma}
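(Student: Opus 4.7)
My plan is to establish (2) first as a tool, then use it and a case-by-case analysis to handle (1), and finally deduce (3). Throughout, I read ``reflection in a line $L$'' as the orientation-preserving half-turn $\sigma_L$ around $L$ (the $\pi$-rotation fixing $L$ pointwise), which is consistent with the hypothesis that $\gamma=\sigma_1\circ\sigma_2$ lies in ${\rm Iso}^+(\HQ^3)$. For (2), given a hyperbolic $\gamma$ with axis $A$ and translation length $\ell$, I would pick any hyperbolic plane $\Pi\supset A$, two points $p_1,p_2\in A$ at signed distance $\ell/2$, and the unique geodesics $L_i\subset\Pi$ through $p_i$ perpendicular to $A$; then $\sigma_{L_1}\sigma_{L_2}$ is translation along $A$ by $\ell$, matching $\gamma$. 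For an elliptic $\gamma$ with axis $A$ and rotation angle $\theta$, I would fix $p\in A$ and take two geodesics through $p$ perpendicular to $A$ making an angle $\theta/2$ with each other. These are direct 3D analogues of the 2D computation carried out in the proof of Lemma~\ref{sigma2}.

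For (1), the forward direction is immediate: if $L_1\cup L_2\subset\Sigma$, each $\sigma_i$ preserves $\Sigma$ setwise (since its axis lies in $\Sigma$) and restricts to the 2D reflection in $L_i$ inside $\Sigma$, so $\gamma$ preserves $\Sigma$ and acts on it as an orientation-preserving 2D isometry. Conjugating globally in $\PSL(2,\C)$ to send $\Sigma$ to the standard copy of $\HQ^2$ (whose stabilizer is $\PSL(2,\R)$) forces $\gamma$ to have a real matrix representative, so $\tr(\gamma)\in\R$. For the converse I would assume $\tr(\gamma)\in\R$, i.e.\ $\gamma$ is elliptic, hyperbolic, or parabolic, and exploit the identity $\sigma_1\gamma\sigma_1=\gamma^{-1}$ (immediate from $\sigma_1^2={\rm Id}$ and $\gamma=\sigma_1\sigma_2$), which forces each $\sigma_i$ to preserve the fixed-point set of $\gamma$ in $\overline{\HQ^3}$. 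In the elliptic and hyperbolic cases each $\sigma_i$ must swap the two endpoints of the axis $A$, and this is possible only if $L_i\perp A$; for the elliptic case the absence of a translational component further forces $L_1$ and $L_2$ to meet $A$ at a common point, spanning the plane through that point perpendicular to $A$, while for the hyperbolic case the absence of rotation around $A$ forces $L_1$ and $L_2$ to be coplanar with $A$. In the parabolic case each $\sigma_i$ must fix the unique parabolic point $q\in\partial\HQ^3$, so $L_i$ has $q$ as one of its ideal endpoints, and two geodesics sharing an ideal endpoint are automatically coplanar.

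For (3), once $L_1,L_2\subset\Sigma$ the restriction $\gamma|_\Sigma$ is the product of two 2D reflections, so by the classical 2D classification $\gamma|_\Sigma$ is elliptic, parabolic, or hyperbolic according to whether $L_1, L_2$ intersect in $\Sigma$, are tangent at $\partial\Sigma$, or are disjoint in $\overline{\Sigma}$. Since fixed points of $\gamma|_\Sigma$ in $\Sigma$ or on $\partial\Sigma$ remain fixed points of $\gamma$ in $\HQ^3$ or $\partial\HQ^3$, and since $\gamma$ is non-loxodromic by (1), the 3D type of $\gamma$ matches the 2D one. If no such $\Sigma$ exists then $\tr(\gamma)\notin\R$ by (1), so $\gamma$ is loxodromic. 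The main obstacle I expect is the converse of (1) in the hyperbolic case: one must rule out decompositions where both $L_i$ meet $A$ perpendicularly at different points but with a nonzero twist around $A$, since these yield a loxodromic composition rather than a hyperbolic one. Lemma~\ref{sigma2} provides the key lever here, since the involutivity of $A_\gamma$ in the non-loxodromic regime constrains the relative orientation of $L_1$ and $L_2$ transverse to $A$.
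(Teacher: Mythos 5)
Your argument is sound but takes a genuinely different route from the paper's. The paper proves all three items by one explicit computation in the upper half-space model: it normalizes $L_1$ to the $j$-axis, represents $\sigma_1$ by the matrix with diagonal entries $i,-i$ and $\sigma_2$ as a conjugate of it by a matrix built from the ideal endpoints $z_0$, $z_1=z_0+2Re^{i\theta}$ of $L_2$, and obtains $\tr(\gamma)=-\frac{z_0e^{-i\theta}+R}{R}$; reality of the trace is then literally the condition $z_0\in\R e^{i\theta}$, i.e.\ that both lines lie in a common vertical plane, the formula $|\tr(\gamma)|=2|1+\frac{t}{R}|$ sorts the tangent/intersecting/disjoint trichotomy of item (3), and surjectivity of the eigenvalue maps in $z_1$ gives item (2). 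You instead argue synthetically: item (2) by directly constructing two half-turn axes perpendicular to the axis of $\gamma$, the forward direction of (1) by restricting to the invariant plane and landing in a conjugate of $\PSL(2,\R)$, and the converse of (1) from the identity $\sigma_1\gamma\sigma_1=\gamma^{-1}$, which forces each $L_i$ to meet the axis $A$ orthogonally and then lets the elliptic/hyperbolic dichotomy (no translation versus no twist) produce the common plane. Your version is coordinate-free and explains \emph{why} coplanarity is equivalent to real trace, whereas the paper's computation is shorter once set up and yields the quantitative trace formula as a byproduct. Two details you should make explicit to close the sketch: first, the claim that the product of two half-turns about lines perpendicular to $A$ at $p_1,p_2$ is a screw motion along $A$ with translation part $2d(p_1,p_2)$ and rotation part twice the angle between $L_1$ and the parallel transport of $L_2$ needs a one-line verification, since it is exactly what converts ``hyperbolic, hence no rotation'' into coplanarity; second, you should dismiss the case in which some $\sigma_i$ fixes both endpoints of $A$ rather than swapping them --- there $L_i=A$, so $\sigma_i$ commutes with $\gamma$, forcing $\gamma^2=1$ and $\sigma_{3-i}=\sigma_i\gamma$ to fail to be a half-turn, so the case is vacuous. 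Your closing appeal to Lemma~\ref{sigma2} is unnecessary: the identity $\tr(\gamma)=2\cosh(\ell/2+i\phi/2)$ for a screw motion already shows that a real trace kills either the translation or the twist.
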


\begin{proof} We may suppose that $L_1$ is the $j$-axis and that $L_2$ is the line joining the points $z_0$ and $z_1$ in $\partial \HQ^3$. In this case we have that $\sigma_1=\begin{pmatrix}
i & o\\
0 & -i
\end{pmatrix}$ and $\sigma_2= \gamma_1^{-1}\sigma_1\gamma_1$, where $\gamma_1=\begin{pmatrix}
1 & -z_0\\
1 & -z_1
\end{pmatrix}$. Write $z_1=z_0+2Re^{i\theta}$, where $R$ is the radius of $L_2$. Then $\tr (\gamma)=-\frac{z_0e^{-i\theta}+R}{R}$. It follows that $\tr(\gamma)\in \R$ if and only if $z_0=te^{i\theta},t\in \R$ and hence $\frac{z_0}{z_1}\in \R$. Hence both lines are contained in a vertical plane and $|\tr(\gamma)|=2|1+\frac{t}{R}|$. From this it follows that $\gamma$ is parabolic if $t\in \{0,-2R\}$, elliptic if $-2R<t<0$ and hyperbolic if $t\notin [-2R,0]$. So from now on we may suppose that $\theta=0$, i.e., $z_0,z_1\in \R$.

In the hyperbolic case we may suppose $z_0=1$. The set of eigenvalues of $\gamma$  is $\{\frac{1-\sqrt{z_1}}{1+\sqrt{z_1}}, \frac{1+\sqrt{z_1}}{1-\sqrt{z_1}}\}$ and the fixed points are $\pm \sqrt{z_1}$. The image of the function $f(z_1)=\frac{1+\sqrt{z_1}}{1-\sqrt{z_1}}$ is $]0,1[$ and hence every hyperbolic element  is a product of two reflections. Note that $\gamma$ restricted to $L_2$ is an Euclidean isometry and hence $L_2\subset \Sigma_{\gamma}$. Note also that in this case the axis of $\gamma$, the hyperbolic line linking its fixed points, is orthogonal to $L_1$.

Finally, we consider the elliptic case.  In this case we have that $z_1>0$ and hence we may suppose $t=-1$.  We obtain that the fixed points of $\gamma$ are $\pm i\sqrt{z_1}$ and its spectrum is $\{\lambda_1,\lambda_2\}$ with $\lambda_2=\frac{2\sqrt{z_1}}{z_1+1}+i\frac{z_1-1}{z_1+1}$. The image of the function $f(z_1)=\frac{z_1-1}{z_1+1}$ is $]-1,1[$ and hence each elliptic element is the product of two reflections in a line. Note that the axis of $\gamma$ (the line linking the two fixed points) and the two lines, $L_1$ and $L_2$, all passes through the point $\sqrt{z_1}j$. \end{proof}

One can consider also the composition of the rotations in two lines. The situation is a bit more complicated but can be handled in a similar way.

Given $\gamma\in \PSL (2,\C)$ define the {\it canonical region} of $\gamma$ to be  \\ $\mbox{Canreg} (\gamma):=\{ P\in \HQ^3\ |\ \sinh [\frac{1}{2}\rho (P,\gamma (P))]<\frac{1}{2}|\tr(\gamma)| \}$ if $\ o(\gamma)\neq 2$ and $\mbox{Canreg} (\gamma):= Fix(\gamma)$ if $\ o(\gamma)=2$ (see \cite{beardon} for the Fuchsian case). Then clearly $\mbox{Canreg} (\gamma_0\gamma\gamma_0^{-1})=\gamma_0(\mbox{Canreg} (\gamma))$. From this it follows that $\gamma(\mbox{Canreg})= \mbox{Canreg} (\gamma)$.

In $\HQ^3$ we have that if $P=z+rj$ then  $\sinh [\frac{1}{2}\rho (P,\gamma (P))]= \frac{\|P-\gamma (P)\|}{2r}$. From this it follows easily that in the parabolic case, with $\gamma$ stabilizing $\infty$, $\mbox{Canreg} (\gamma)$ is the horoball $\{ P=z+rj\in \HQ^3\ |\ r>\frac{|b(\gamma)|}{2}\}$.

In the elliptic case we may suppose that $\gamma$ is a diagonal matrix and $a=a(\gamma)=e^{i\theta}$. In this case $\sinh [\frac{1}{2}\rho (P,\gamma (P))]=\frac{|1-a^2||z|}{2r}=\frac{|z|}{r}|\sin(\theta)|=\frac{|z|}{r}|\sin(\ln (a))|$. Let $L=\R j$. We have that $\cosh (\rho (P,\|P\|j))=\frac{\|P\|}{r}$ and hence $\sinh (\rho (P,\|P\|j))=\frac{\|z\|}{r}$. It follows that $\sinh [\frac{1}{2}\rho (P,\gamma (P))]=\sinh (\rho (P,L))|\sin (\ln a)|$.

We now look at the hyperbolic case.  Proceeding as in the elliptic case we obtain that \\ $\sinh [\frac{1}{2}\rho (P,\gamma (P))]=\frac{\|P\|}{r}|\sinh (\ln a)|$. In a similar way we obtain that $\sinh [\frac{1}{2}\rho (P,\gamma (P))]$ \\ $=\sinh (\rho (P,L))|\sinh (\ln a)|$. In this case $\frac{1}{2}|\tr(\gamma)| =\cosh (\ln a)$ and hence $P=$ $z+rj=$ $x+yi+rj\in \mbox{Canreg}(\gamma)$ if $x^2+y^2<\frac{r^2}{\sinh^2(\ln a)}$. Consider the line $l_{\gamma}$ given by the equations $y=0 $ and $r-x\sinh (\ln a)=0$ and also the  sphere, $\Sigma$ say, passing through $z$ and $\gamma(z)$ and orthogonal to $\partial (\HQ^3)$. Then $\Sigma$ is the sphere with center $\frac{(1+a^2)z}{2}$ and radius $\frac{|1-a^2||z|}{2}$. A simple calculation shows that $\Sigma$ is tangent to $\mbox{Canreg}(\gamma)$. In this case, note that if $\ \mbox{Canreg}(\gamma)=\mbox{Canreg}(\gamma_1)$ then $|\sinh (\ln a(\gamma))|=|\sinh (\ln a(\gamma_1))|$. From this it readily follows that $\gamma_1\in \{\gamma, \gamma^{-1}\}$.

In the elliptic case we obtain the cone $x^2+y^2<r^2co\tan^2(\theta_{\gamma})$, where $a(\gamma)=e^{i\theta}$. From this we also infer that if $\mbox{Canreg}(\gamma)=\mbox{Canreg}(\gamma_1)$ then $|\tan ( \theta_{\gamma})|=|\tan ( \theta_{\gamma_1})|$ and hence $\gamma_1\in \{\gamma, \gamma^{-1}\}$. In both cases $L$ is the axis of $\gamma$.

\vp
\noindent {\bf{Acknowledgment:}} The first  author is grateful to the Universidade Federal da Paraíba (UFPB-Brazil) and the Universidade Federal do Vale do São Francisco (UNIVASF-Brazil), for their hospitality while this research was being done.

%******Bibliography*****************

\vspace{.2cm}

\noindent Instituto de Matem\'atica e Estatistica,\\ 
Universidade de S\~ao Paulo (IME-USP),\\ 
Caixa Postal 66281, S\~ao Paulo,\\ 
CEP  05315-970 - Brasil \\
email: ostanley@usp.br

\vspace{.2cm}

 \noindent Universidade Federal do Vale do São Francisco,\\
  Colegiado de Engenharia Mecânica,\\ 
Avenida Antonio Carlos Magalhães, 510, \\
Colegiado de Engenharia Mecanica,\\
Santo Antônio\\
48902-300 - Juazeiro, BA - Brasil.\\
e-mail:  cirino.lima@univasf.edu.br

\vspace{.2cm}

\noindent Departamento de Matematica\\
Universidade Federal da Paraiba\\
João Pessoa - PB - Brasil\\
e-mail: andrade@mat.ufpb.br

\vspace{.25cm}

\end{document}